\theoremstyle{plain}
    \newtheorem{theorem}{Theorem}[section]
    \newtheorem{lemma}[theorem]{Lemma}
    \newtheorem{proposition}[theorem]{Proposition}
 \theoremstyle{definition}
    \newtheorem{definition}[theorem]{Definition}
    \newtheorem{example}[theorem]{Example}
    \newtheorem{remark}[theorem]{Remark}
\theoremstyle{remark}
\numberwithin{equation}{section}
\DeclareMathOperator{\Ad}{Ad}
\DeclareMathOperator{\ind}{index}
\DeclareMathOperator{\reg}{reg}
\DeclareMathOperator{\Spin}{Spin}
\DeclareMathOperator{\SO}{SO}
\DeclareMathOperator{\SL}{SL}
\DeclareMathOperator{\GL}{GL}
\DeclareMathOperator{\SU}{SU}
\DeclareMathOperator{\U}{U}
\DeclareMathOperator{\PGL}{PGL}
\DeclareMathOperator{\Inn}{Inn}
\DeclareMathOperator{\Gal}{Gal}
\DeclareMathOperator{\Aut}{Aut}
\begin{document}


\newcommand{\myemph}{\emph}

\newcommand{\Spinc}{\Spin^c}
\newcommand{\LanG}{^L\!G}
    \newcommand{\R}{\mathbb{R}}
    \newcommand{\C}{\mathbb{C}} 
    \newcommand{\N}{\mathbb{N}}
    \newcommand{\Z}{\mathbb{Z}} 
    \newcommand{\Q}{\mathbb{Q}}
    \newcommand{\bT}{\mathbb{T}}
    \newcommand{\bP}{\mathbb{P}}

\newcommand{\g}{\mathfrak{g}}
\newcommand{\h}{\mathfrak{h}}
\newcommand{\p}{\mathfrak{p}}
\newcommand{\kg}{\mathfrak{g}} 
\newcommand{\kt}{\mathfrak{t}}
\newcommand{\ka}{\mathfrak{a}}
\newcommand{\XX}{\mathfrak{X}}
\newcommand{\kh}{\mathfrak{h}} 
\newcommand{\kp}{\mathfrak{p}}
\newcommand{\kk}{\mathfrak{k}}

\newcommand{\cA}{\mathcal{A}}
\newcommand{\cE}{\mathcal{E}}
\newcommand{\calL}{\mathcal{L}}
\newcommand{\calH}{\mathcal{H}}
\newcommand{\cO}{\mathcal{O}}
\newcommand{\cB}{\mathcal{B}}
\newcommand{\cK}{\mathcal{K}}
\newcommand{\cP}{\mathcal{P}}
\newcommand{\cN}{\mathcal{N}}
\newcommand{\calD}{\mathcal{D}}
\newcommand{\cC}{\mathcal{C}}
\newcommand{\calS}{\mathcal{S}}
\newcommand{\cM}{\mathcal{M}}

\newcommand{\cCM}{\cC}
\newcommand{\PM}{P}
\newcommand{\DM}{D}
\newcommand{\LM}{L}
\newcommand{\vM}{v}

\newcommand{\LG}{L^G_{\lambda - \rho}}
\newcommand{\LGc}{L^{G_c}_{\lambda - \rho}}

\newcommand{\Bigwedge}{\textstyle{\bigwedge}}

\newcommand{\ii}{\sqrt{-1}}

\newcommand{\beq}[1]{\begin{equation} \label{#1}}
\newcommand{\eeq}{\end{equation}}

\newcommand{\ddt}{\left. \frac{d}{dt} \right|_{t=0}}

\newcommand{\Todo}{\textbf{To do}}

\newenvironment{proofof}[1]
{\noindent \emph{Proof of #1.}}{\hfill $\square$}

\title{Shelstad's character identity from the  point of view of index theory}

\author{Peter Hochs\footnote{University of Adelaide, \texttt{peter.hochs@adelaide.edu.au}} \ 
and Hang Wang\footnote{University of Adelaide, \texttt{hang.wang01@adelaide.edu.au}}}


\date{\today}

\maketitle

\begin{abstract}
Shelstad's character identity is an equality between sums of characters of tempered representations in corresponding $L$-packets of two real, semisimple, linear, algebraic groups that are inner forms to each other.
We reconstruct  this character identity in the case of the discrete series, using index theory of elliptic operators in the framework of $K$-theory.
Our geometric proof of the character identity 
is evidence that index theory can play a role in the classification of group representations via the Langlands program.
\end{abstract}

\tableofcontents

\section{Introduction}

The aim of this paper is to use our previous results in~\cite{Hochs-Wang} to compare representations of two connected real semisimple groups $G, G'$ 
having the same Langlands dual group.
We are inspired by the so-called Shelstad's character identity following from Langlands program~\cite{ShelstadThesis, Shelstad79}. 
In fact, in the local Langlands program~\cite{Langlands1, Langlands2, Langlands3}, every admissible representation of a real reductive Lie group is labeled by its $L$-parameter, which is represented by a  homomorphism from the Weil group $W_{\R}$ of the real numbers to the Langlands dual group of $G.$ 
If two real reductive groups $G$ and $G'$ have the same Langlands dual group, then every $L$-parameter $\phi$ of $G$ can be identified to an $L$-parameter $\phi'$ for $G'$. 
Denote by $\Pi_{\phi}$ the $L$-packet of $\phi$, i.e., the set of admissible representations of $G$ having the same $L$-parameter $\phi$.
Shelstad's character identity states that the characters $\Theta_{\pi}$ of tempered representations $\pi$ for $G$ and $G'$ associated to the same $L$-parameter satisfy the identity 
\begin{equation}
\label{eq:Shelstad}
(-1)^{\dim G/K}\sum_{\pi\in\Pi_{\phi}}\Theta_{\pi}(h)=(-1)^{\dim G'/K'}\sum_{\pi'\in\Pi_{\phi'}}\Theta_{\pi'}(h'), 
\end{equation}
if $h$ and $h'$ are corresponding regular elements of Cartan subgroups $H<G$ and $H'<G'$, respectively.



The main result of this paper is a direct geometric proof of the character identity~(\ref{eq:Shelstad}) in the case of discrete series representations, without referring to character formulas or the theory of Langlands program. The proof uses index theory and $K$-theory of $C^*$-algebras, and illustrates how these are related to representation theory and the Langlands program.

\subsection*{Acknowledgements}

The authors are grateful to Kuok Fai Chao for background information about the Langlands program. The second author was supported by the Australian Research Council, through Discovery Early Career
Researcher Award DE160100525.

\section{Character identities and index theory}

\subsection{Inner forms}

We recall the definition of inner forms, as discussed for example in~\cite{ABV}. 
Let $G_{\C}$ be a connected reductive complex algebraic group. A \emph{real form} of $G_{\C}$ is an involutive automorphism $\sigma\colon G_{\C}\rightarrow G_{\C}$ which is antiholomorphic; i.e., $T_e\sigma(iX)=-iT_e\sigma(X)$ for all $X \in \kg_{\C}$. 
The involution $\sigma$ is a generalised notion of complex conjugation.
The \emph{group of real points} of $\sigma$ 
\[
G(\R, \sigma)=\{g\in G_{\C}: \sigma(g)=g\}
\]
is called the \emph{real form} of $G_{\C}$ associated to $\sigma.$

\begin{definition}[{\cite[Chapter~2]{ABV}}]
Two real forms $G(\R, \sigma)$ and $G(\R, \sigma')$ of $G_{\C}$ are said to be \emph{inner} to each other if there is an element $g\in G_{\C}$ such that 
\begin{equation}
\label{eq:inner.form.relation}
\sigma'=C_g\circ\sigma,
\end{equation} 
where $C_g$ denotes conjugation by $g$.
The real group $G':=G(\R, \sigma')$ is called an \emph{inner form} of the real group $G:=G(\R, \sigma)$.
\end{definition}


The set of inner forms for $G$ can be identified bijectively to the first Galois cohomology $H^1(\Gamma, \Inn(G_{\C}))$ where $\Gamma$ is the Galois group $\Gal(\C/\R)$ of two elements, where the complex conjugation (the generator) takes $\eta\in \Aut(G_{\C})$ to $\sigma\circ\eta\circ\sigma^{-1}\in \Aut(G_{\C})$, and $\Inn(G_{\C})$ is the subgroup of inner automorphisms of $G_{\C}$ respecting the action of $\Gamma$. See~\cite{Borel}.

\begin{example}
\label{ex:inner.form}
\begin{itemize}
\item Let $G_{\C}=\SL(2,\C)$. $G_{\C}$ has a compact real form $G=\SU(2)$ associated to $\sigma(g)=(\bar g^T)^{-1}$ and a split real form $G'=\SL(2, \R)$ accosiated to $\sigma(g)=\bar g.$ It can be checked that $G'$ is the unique inner form for $G$.
\item $\SL(n, \R)$ when $n$ odd does not have any inner form other than itself; It has an extra inner form if $n$ is even.
Note that when $n\ge 3$, $\SL(n, \R)$ does not have compact inner forms in view of Theorem~\ref{Harish-Chandra} below.
\item $\U(2)$ and $\U(1,1)$ are inner forms to each other. 

\item $\SU(2,1)$ and $\SU(3)$ are inner forms to each other.
\item $\SO(2p, 2q)$ where $2p+2q=4n$ are inner forms to each other. In particular, $\SO(4n)$ is the compact inner form amongst elements of this set.
\end{itemize}
\end{example}

If $G$ and $G'$ are inner forms to each other in a complex group $G_{\C}$, then any Cartan subgroup $H<G$ is conjugate in $G_{\C}$ to a Cartan subgroup $H'<G'$. See~Lemma~2.1 of~\cite{Langlands3} and~\cite{Shelstad79}. We will use this to identify elements $h \in H$ to corresponding elements $h' \in H'$.
\begin{example}
Let $G=\SU(2)$ and $G'=\SL(2, \R)$. Then their respective Cartan subgroups 
\[
T=\left\{\begin{bmatrix}e^{i\theta} & 0 \\ 0 & e^{-i\theta}\end{bmatrix}:\theta\in\R\right\} \qquad\text{and}\qquad T'=\left\{\begin{bmatrix}\cos\theta & \sin\theta \\ -\sin\theta & \cos\theta \end{bmatrix}:\theta\in\R\right\}
\]
are conjugated by $\begin{bmatrix}-i & -1 \\ \frac12 & \frac{i}{2}\end{bmatrix}\in \SL(2, \C).$
\end{example}

\subsection{$L$-packets and character identities}

Inner forms are closely related to the Langlands program. Let $G$ be a real reductive algebraic group, and let $G^\vee$ denote the Langlands dual group of $G$. Consider the \emph{$L$-group} $\LanG = G^\vee \rtimes \Gal(\C/\R)$. Let  $W_{\R}$ be the \emph{Weil group} of real numbers, i.e., the group $\C^{\times}\cup j\C^{\times}$ subject to relations
\[
j^2=-1 \quad \text{and} \quad jz=\overline{z}j. 
\]
Then the irreducible admissible representations of $G$  are parametrised by $L$-parameters. These are group homomorphisms 
\[
\phi\colon W_{\R}\rightarrow \LanG
\] 
satisfying certain conditions.
In fact, the local Langlands correspondence gives rise to a surjective map 
\begin{equation}\label{eq:LLC}
f: \Pi(G)\rightarrow \Phi(G) \qquad \text{where}
\end{equation}
\begin{align*} 
\Pi(G)&:=\{\text{equivalence classes of irreducible admissible representations of }G\}; \\
\Phi(G)&:=\{\text{$G^\vee$-conjugacy classes of $L$-parameters } \phi\colon W_{\R}\rightarrow\LanG\}
\end{align*}
where $f^{-1}(\{\phi\})$ is finite for every $\phi\in \Phi(G).$
The finite set 
\[
\Pi_{\phi}:=f^{-1}(\{\phi\})
\]
is called the \emph{$L$-packet} associated to the $L$-parameter $\phi$.
In other words, two admissible irreducible representations $\pi_1, \pi_2$ are said to be in the same $L$-packet if and only if they have the same $L$-parameter. 
Representations having the same $L$-parameter are indistinguishable in the sense of Langlands.

When $G$ and $G'$ are inner forms to each other, they share the same $L$-group. 
Groups with the same  $L$-group are crucial in the sense that the representation theory of these groups are closely related and can be studied globally.
The character identity in Theorem \ref{thm Shelstad} below, regarding inner forms is an instance of this philosophy. 
In fact, by the local Langlands correspondence, the $L$-packets of admissible representations for $G$ and $G'$ are related by their corresponding $L$-parameters. 
Hence, 
a character identity involving a common $L$-packet can be expected. 
For more details of Langlands program and character identity we refer to Langlands~\cite{Langlands1, Langlands2, Langlands3} and Shelstad~\cite{ShelstadThesis, Shelstad79}.



Let $\phi$ be an $L$-parameter of admissible representations of $G$. 
According to Langlands~\cite{Langlands3}, all admissible representations in the same $L$-packet of a tempered representation are tempered. Hence the $L$-parameter $\phi$ of a tempered representation is called tempered. 
Let $G'$ be 
 an  inner form of $G$.
 Suppose $G$ is 
 \emph{quasi-split}, meaning that $G$  has a $\sigma$-invariant Borel subgroup.
  In that case, we have $\Phi(G')\subset \Phi(G)$. Suppose $\phi \in \Phi(G')$. We write $\phi'$ for $\phi$, when we view it as an $L$-parameter of $G'$.
Then $\phi'$ is tempered if $\phi$ is.
The same line of statements remains true with ``tempered' replaced
 by ``discrete series".  

For any irreducible admissible representation $\pi$, we denote its global character by $\Theta_\pi$.
Shelstad's character identity is stated as follows.
\begin{theorem}[Shelstad's character identity~\cite{ShelstadThesis, Shelstad79}] \label{thm Shelstad}
Let $G$ and $G'$ be connected, real reductive, linear algebraic groups, and inner forms to each other in $G_{\C}$. Suppose $G$ is quasi-split. Let $K<G$ and $K'<G'$ be maximal compact subgroups and let $H<G$ and $H'<G$ be Cartan subgroups that are conjugate to each other in $G_{\C}$.
Let $h$ be an elliptic regular element of $H$ and $h'$ the corresponding element of $H'$. 
Let $\phi'$ be a tempered $L$-parameter of $G'$, and let 
$\phi$ be the corresponding $L$-parameter of $G$.
Then 
\begin{equation}
\label{eq:Char.id}
(-1)^{\frac{\dim G/K}{2}}\sum_{\pi\in\Pi_{\phi}}\Theta_{\pi}(h)=(-1)^{\frac{\dim G'/K'}{2}}\sum_{\pi'\in\Pi_{\phi'}}\Theta_{\pi'}(h').
\end{equation}
\end{theorem}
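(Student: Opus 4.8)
The plan is to exhibit both signed character sums as two evaluations of one and the same index-theoretic invariant, and then to observe that this invariant depends only on data common to $G$ and $G'$. The shared data are the complexification $G_{\C}$, the complexified Cartan $\kt_{\C} = \kh_{\C}$, the root system $\Delta = \Delta(\kg_{\C}, \kt_{\C})$, the full Weyl group $W = W(\kg_{\C}, \kt_{\C})$, and the Harish-Chandra parameter $\lambda$ of $\phi$ (equivalently of $\phi'$). Since $h$ is elliptic regular, $H$ and hence $H'$ are compact, so both groups possess discrete series and $\Pi_{\phi}, \Pi_{\phi'}$ are discrete series $L$-packets. The two groups differ only through their maximal compact subgroups, and this difference is recorded by the compact Weyl groups $W_K = W(\kk_{\C}, \kt_{\C})$ and $W_{K'} = W(\kk'_{\C}, \kt_{\C})$, which split $W$ into the $|\Pi_{\phi}| = |W|/|W_K|$ and $|\Pi_{\phi'}| = |W|/|W_{K'}|$ cosets indexing the packets. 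This is exactly why the number of summands and the sign differ between the two sides of~(\ref{eq:Char.id}).

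First I would use~\cite{Hochs-Wang} to realise each $\pi \in \Pi_{\phi}$ as the equivariant $L^2$-index of a $\Spinc$-Dirac operator $D_{\lambda}$ on $G/K$, twisted by the line bundle associated to the weight $\lambda - \rho$ (Dirac induction). A given member of the packet is pinned down by a choice of positive system, i.e.\ by a $W_K$-coset in $W$. The global character $\Theta_{\pi}$ evaluated at $h$ is then the value of this equivariant index at $h$, so the left-hand side of~(\ref{eq:Char.id}) assembles, as the $W_K$-cosets vary, into a single equivariant-index quantity attached to the triple $(G/K, D_{\lambda}, h)$, the factor $(-1)^{\dim(G/K)/2}$ absorbing the orientation convention of the $\Spinc$-structure. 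The identical construction applies to $G'$, $G'/K'$ and $h'$.

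Next I would apply the $L^2$-Lefschetz fixed point formula of~\cite{Hochs-Wang} for the action of the elliptic regular element $h$ on $G/K$. Because $h$ is regular, its centraliser is the compact Cartan $H$ and the isotropy action on $\kp = \kg/\kk$ has no nonzero fixed vectors, so the contribution localises to fixed-point data expressed entirely through the roots in $\Delta$ evaluated at $h$. The localisation produces a ratio whose denominator is the Weyl factor $\prod_{\alpha \in \Delta^+}(h^{\alpha/2} - h^{-\alpha/2})$, coming from the isotropy action on $\kp$, and whose numerator, for a single member of the packet, is the partial alternating sum $\sum_{w \in W_K}\sgn(w)\, h^{w\lambda}$; summing over the $W_K$-cosets in the packet upgrades this to the full sum $\sum_{w \in W}\sgn(w)\, h^{w\lambda}$. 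The decisive point is that the resulting expression no longer refers to $K$: the entire dependence on the maximal compact subgroup has been transferred into the counting of cosets, which the signed packet sum cancels.

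Carrying out the same computation for $G'$, and using that $h'$ corresponds to $h$ under the conjugation of $H'$ into $H$ inside $G_{\C}$ (so that $(h')^{w\lambda} = h^{w\lambda}$ and the two Weyl denominators coincide), both sides of~(\ref{eq:Char.id}) collapse to
\begin{equation*}
\frac{\sum_{w \in W}\sgn(w)\, h^{w\lambda}}{\prod_{\alpha \in \Delta^+}(h^{\alpha/2} - h^{-\alpha/2})},
\end{equation*}
which yields the identity. I expect the fixed point formula to be the main obstacle: one must justify the localisation of an \emph{$L^2$}-index on the noncompact manifold $G/K$ (convergence and the identification of the fixed-point contribution, in the spirit of the Connes--Moscovici $L^2$-index theorem), and then track the orientation and $\rho$-shift signs carefully enough to obtain exactly $(-1)^{\dim(G/K)/2}$. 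A secondary difficulty is the combinatorial bookkeeping that glues the partial $W_K$-sums into a single $W$-alternating sum with consistent signs; this is where the quasi-split hypothesis on $G$, and the matching of Weyl chambers across the two inner forms, are needed.
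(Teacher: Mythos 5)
Your proposal is correct in outline, but it takes a genuinely different route from the paper's --- in substance it is the classical proof via Harish--Chandra's character formula, which the paper explicitly says it avoids. You evaluate both sides: applying the $L^2$-fixed point theorem of \cite{Hochs-Wang} to Dirac operators on $G/K$ and $G'/K'$ yields, for each packet member, Harish--Chandra's formula $(-1)^{\dim(G/K)/2}\Theta_{\pi^G_{w\lambda}}(h) = \sum_{u\in W_K}\sgn(u)h^{uw\lambda}\big/\prod_{\alpha\in wR^+}(h^{\alpha/2}-h^{-\alpha/2})$, and the identity reduces to reassembling the $|W_G/W_K|$ partial alternating sums into a single Weyl quotient that depends only on $G_{\C}$-data. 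The paper instead (i) reduces to the case where $G'$ is the compact inner form $G_c$, (ii) works on $G/T$ with Dolbeault operators coupled to $L^G_{w^{-1}(\lambda-\rho)}$ rather than on $G/K$ with Dirac operators, and (iii) never evaluates any index: it builds a $T$-equivariant, holomorphic-up-to-homotopy identification of neighbourhoods of $(G/T\times W_G/W_K)^T$ and $(G_c/T)^T$ that matches the line bundles, so that the excision property (Proposition \ref{prop loc}) alone forces equality of the two orbital-integral traces, which are then recognised as the two signed character sums via Proposition \ref{prop ds index} and Borel--Weil--Bott. Your route buys an explicit closed formula for the common value; the paper's buys a proof in which the sign $(-1)^{\dim(G/K)/2}$ and the coset count $W_G/W_K$ have a direct geometric origin, with no character formula or fixed-point evaluation. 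Note that the step you defer is where essentially all the work in your route lives: the denominators attached to different packet members are Weyl denominators of different positive systems $wR^+$ and differ by the signs $\sgn(w)$, which must combine with the inner signs $\sgn(u)$ to produce $\sum_{w\in W_G}\sgn(w)h^{w\lambda}$; moreover the full Weyl denominator does not come from the isotropy action on $\kp$ alone (that accounts only for the noncompact roots), the compact-root factors arising from the Weyl numerator of the $K$-isotropy character at the single fixed point $eK$.
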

This is Theorem~6.3 in~\cite{Shelstad79}.
In this paper, we will prove Shelstad's character identity~(\ref{eq:Char.id}) using $K$-theory and index theory when $\Pi_{\phi}$ and $\Pi_{\phi'}$ are $L$-packets of discrete series representations. 

\subsection{Character identities for the discrete series}

We now assume in addition that the group $G$ is semisimple, and has discrete series representations. Harish-Chandra has the following equivalent statements for existence of discrete series. See for example Theorem 22.1 in~\cite{Thorne}.

\begin{theorem}[Harish-Chandra]
\label{Harish-Chandra}
The following statements are equivalent.
\begin{enumerate}
\item $G$ has a compact Cartan subgroup $T<G$;
\item $G$ has a compact inner form $G'$;
\item $G$ has discrete series, i.e., square integrable representations.
\end{enumerate}
In particular, if $G$ has a discrete series, then it always has a compact inner form.
\end{theorem}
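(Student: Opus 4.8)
The plan is to route all three statements through the equal-rank condition $\rank G = \rank K$, where $K<G$ is a maximal compact subgroup with Cartan involution $\theta$ and Cartan decomposition $\g = \kk \oplus \kp$, $\kk = \Lie(K)$ being the $+1$-eigenspace of $\theta$. I would first prove $(1)\Leftrightarrow(2)$ by structure theory. A compact Cartan subgroup $T<G$ exists exactly when $\rank G = \rank K$: in that case a maximal torus $T$ of $K$ is already a Cartan subgroup of $G$, its Lie algebra $\kt \subset \kk$ is fixed pointwise by $\theta$, and since an automorphism of $\g_{\C}$ fixing a Cartan subalgebra pointwise lies in the corresponding maximal torus, $\theta = \Ad(t)$ is inner. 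Conversely, if $\theta = \Ad(t)$ is inner, then a maximal torus through $t$ produces a Cartan subalgebra inside $\kk$, forcing $\rank K = \rank G$. The compact real form $G_c$ has trivial Cartan involution, and writing the conjugations compatibly as $\sigma = \theta\sigma_c$ shows that $G$ is inner to $G_c$ precisely when $\theta$ itself is an inner automorphism. This identifies $(2)$ with the equal-rank condition, hence with $(1)$.

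Next, for $(1)\Rightarrow(3)$ I would appeal to Harish-Chandra's construction of the discrete series, or, in the geometric spirit of this paper, to the realization of Atiyah and Schmid: when $\rank G = \rank K$ the space $G/K$ is even-dimensional and carries a $G$-invariant $\Spinc$-structure, and the equivariant index of the Dirac operator twisted by the line bundle $\LG$ attached to a regular parameter $\lambda$ of $T$ yields a square-integrable representation. The existence of the compact Cartan $T$ is exactly what makes such regular parameters, and hence the discrete series, available.

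For the converse $(3)\Rightarrow(1)$ I would argue that if $\rank G > \rank K$, then every Cartan subgroup of $G$ has a nontrivial split part, and Harish-Chandra's Plancherel theory shows that the discrete part of the Plancherel measure is then empty, so no square-integrable representation can exist. The ``in particular'' statement is then immediate: combining $(3)\Rightarrow(1)\Rightarrow(2)$ shows that a group with discrete series always admits a compact inner form.

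The main obstacle is the analytic equivalence $(1)\Leftrightarrow(3)$: both the construction and the square-integrability of Harish-Chandra's discrete series, and the vanishing of the discrete Plancherel measure in the unequal-rank case, are the genuine content of the theorem, whereas $(1)\Leftrightarrow(2)$ is comparatively elementary structure theory. Since these analytic facts are precisely Harish-Chandra's theorem, in practice I would cite them rather than reprove them.
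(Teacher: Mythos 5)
The paper does not prove this theorem at all: it is stated as background, attributed to Harish--Chandra, and cited from Thorne's lecture notes (Theorem 22.1 there), so there is no in-paper argument to compare yours against. Your outline is a sound sketch of the standard proof. The reduction of both (1) and (2) to the equal-rank condition $\rank G = \rank K$ is the right elementary step: a compact Cartan subgroup exists iff a maximal torus of $K$ is already a Cartan subgroup of $G$, and, extending the Cartan involution $\theta$ holomorphically to $G_{\C}$ and writing $\sigma_c = \theta\circ\sigma$, the group $G$ is inner to its compact form precisely when $\theta$ is inner, which happens iff $\theta$ fixes a Cartan subalgebra pointwise, i.e.\ iff $\kk$ contains a Cartan subalgebra of $\g$. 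One small point worth being careful about: in the converse direction of $(1)\Leftrightarrow(2)$ you should justify that an inner involution $\Ad(t)$ can be conjugated so that $t$ lies in a maximal torus of $K$, whence the centralizer argument produces the Cartan subalgebra inside $\kk$; as written, ``a maximal torus through $t$'' presumes $t$ is semisimple and compatible with $\theta$, which does hold but deserves a sentence. For $(1)\Leftrightarrow(3)$ you correctly identify that the construction of the discrete series (Harish--Chandra, or geometrically Atiyah--Schmid) and the vanishing of the discrete part of the Plancherel measure in the unequal-rank case are the genuine analytic content; citing Harish--Chandra for these, exactly as the paper itself does, is the appropriate move rather than a gap. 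The ``in particular'' clause follows formally from $(3)\Rightarrow(2)$ as you say.
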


As a result, all but the second case in Example~\ref{ex:inner.form} admit discrete series and hence they have compact Cartan subgroups.
Assume $G$ has discrete series and $T$ is a compact Cartan subgroup from now on. 
\begin{theorem}[{\cite[Corollary 2.9]{Shelstad79}}]
Any inner form $G'$ of $G$ also 
 contains a compact Cartan subgroup.
\end{theorem}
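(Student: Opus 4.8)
The plan is to deduce the statement from Theorem~\ref{Harish-Chandra} together with the elementary observation that ``being inner forms'' is an equivalence relation on the set of real forms of $G_{\C}$. First I would record this latter fact: the defining relation $\sigma' = C_g \circ \sigma$ is reflexive (take $g = e$), symmetric (if $\sigma' = C_g\circ\sigma$ then $\sigma = C_{g^{-1}}\circ\sigma'$), and transitive (if $\sigma' = C_g\circ\sigma$ and $\sigma'' = C_h\circ\sigma'$ then $\sigma'' = C_{hg}\circ\sigma$). Hence all real forms of $G_{\C}$ that are inner to $G$ constitute a single equivalence class, the inner class of $G$, and membership in it is symmetric and transitive.

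With this in hand the argument is short. Since $G$ has discrete series by assumption, it has a compact Cartan subgroup $T$, so implication $(1)\Rightarrow(2)$ of Theorem~\ref{Harish-Chandra} provides a compact inner form $G_c$ of $G$; by definition this $G_c$ lies in the inner class of $G$. Now $G'$ is also an inner form of $G$ by hypothesis, so by symmetry and transitivity of the inner relation $G_c$ and $G'$ are inner to each other, i.e. $G_c$ is a compact inner form of $G'$. Since $G'$ is a real form of the semisimple group $G_{\C}$ it is itself semisimple, so Theorem~\ref{Harish-Chandra} applies to it, and implication $(2)\Rightarrow(1)$ for $G'$ yields a compact Cartan subgroup of $G'$. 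This completes the proof.

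The main point requiring care is the interface with Theorem~\ref{Harish-Chandra}: one must check that the notion of ``compact inner form'' invoked there is exactly the equivalence-class notion used here, and that its hypotheses (in particular semisimplicity) hold for $G'$ and not merely for $G$. A more hands-on alternative would instead start from the fact, recalled earlier in this section, that $T$ is $G_{\C}$-conjugate to a Cartan subgroup $T' < G'$, and then try to show that the conjugation, combined with the relation $\sigma' = C_g\circ\sigma$, forces the complex conjugation on $X_*(T'_{\C})$ to act as $-1$, so that $T'$ is anisotropic and hence compact. The difficulty there is precisely tracking how compactness is preserved through the cocycle $g\,\sigma(g)$, which is why I expect the equivalence-class argument above to be the cleaner route.
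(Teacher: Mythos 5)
Your argument is correct, and it is worth noting that the paper itself gives no proof at all for this statement: it is quoted directly from Shelstad (Corollary~2.9 of the cited 1979 paper), where the argument runs through the structure theory of the inner twist and Cartan subgroups rather than through a formal reduction. Your route is different and, within the logical framework of this paper, cleaner: you observe that the relation $\sigma' = C_g\circ\sigma$ is an equivalence relation on real forms of $G_{\C}$ (reflexivity, symmetry and transitivity are exactly as you compute, and the antiholomorphy and involutivity of the composites are automatic since the forms involved are given to be real forms), so the compact inner form $G_c$ of $G$ produced by implication $(1)\Rightarrow(2)$ of Theorem~\ref{Harish-Chandra} is also an inner form of $G'$, and then $(2)\Rightarrow(1)$ applied to $G'$ (which is indeed semisimple, being a real form of the semisimple $G_{\C}$) gives the compact Cartan subgroup. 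The trade-off is that your proof offloads all analytic and structural content onto Theorem~\ref{Harish-Chandra}, which the paper also only cites; Shelstad's original argument is self-contained at that level but longer. Your closing remark about the ``hands-on'' alternative is also well judged: tracking how the cocycle $g\,\sigma(g)$ interacts with anisotropy of $X_*(T'_{\C})$ is exactly where that approach becomes delicate, and the equivalence-class argument avoids it entirely. The only point I would insist you make explicit is the one you already flag: that ``compact inner form'' in Theorem~\ref{Harish-Chandra} means precisely membership of the compact real form in the inner class, so that transitivity really does hand you the hypothesis of $(2)$ for $G'$.
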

So an inner form $G'$ of $G$ also has discrete series representations. 



For discrete series, we have the following property, of $L$-packets, which can be found for example in~\cite{Labesse}.
\begin{proposition}\label{prop Lpack ds}
If $G$ has discrete series, then two discrete series representations $\pi_1$ and $\pi_2$ are in the same $L$-packet if and only if they have the same infinitesimal character. 
\end{proposition}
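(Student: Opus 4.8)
The plan is to reduce the statement to Harish-Chandra's parametrization of the discrete series together with the explicit shape of discrete series $L$-parameters. Since $G$ is semisimple with a compact Cartan subgroup $T$, Harish-Chandra's construction attaches to each discrete series representation $\pi$ a \emph{Harish-Chandra parameter}: a regular element $\lambda$ of a fixed translate (by $\rho$) of the weight lattice of $T$, well defined up to the action of the compact Weyl group $W(K,T) = N_K(T)/T$. This yields a bijection between equivalence classes of discrete series representations of $G$ and regular Harish-Chandra parameters modulo $W(K,T)$. I would begin by recalling this correspondence and fixing the notation $\pi \leftrightarrow [\lambda]$.

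Next I would express the two invariants in the statement in terms of $\lambda$. By the Harish-Chandra isomorphism, the infinitesimal character of $\pi$ is exactly the image of $\lambda$ under the quotient map $i\kt^* \to i\kt^*/W(\kg_\C,\kt_\C)$; hence two discrete series representations have the same infinitesimal character if and only if their Harish-Chandra parameters lie in a single orbit of the full complex Weyl group $W(\kg_\C,\kt_\C)$. On the other side, the discrete series $L$-parameter $\phi$ attached to $\pi$ is determined up to $G^\vee$-conjugacy by the restriction $\phi|_{\C^\times}$, an elliptic quasi-character of $\C^\times$ whose exponents are precisely the coordinates of $\lambda$; consequently two discrete series representations have the same $L$-parameter exactly when their Harish-Chandra parameters are again $W(\kg_\C,\kt_\C)$-conjugate. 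Both conditions therefore translate into the same condition on $\lambda$.

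Comparing the two descriptions gives the proposition: $\pi_1$ and $\pi_2$ lie in the same $L$-packet if and only if their Harish-Chandra parameters are $W(\kg_\C,\kt_\C)$-conjugate, which holds if and only if they share the same infinitesimal character. As a byproduct one sees that a discrete series $L$-packet is a single $W(\kg_\C,\kt_\C)$-orbit of parameters subdivided into its $W(K,T)$-orbits, so that $|\Pi_\phi| = |W(\kg_\C,\kt_\C)/W(K,T)|$, which is useful for the later index-theoretic counting.

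I expect the main obstacle to be the middle step: making precise that the discrete series $L$-parameter records exactly the $W(\kg_\C,\kt_\C)$-orbit of $\lambda$ and nothing more, i.e. that passing from $\pi$ to $\phi$ forgets precisely the $W(K,T)$-refinement while retaining the infinitesimal character. This requires the explicit classification of discrete (elliptic) $L$-parameters for real groups following Langlands and Shelstad, together with the compatibility of the Harish-Chandra isomorphism with the normalization of the local Langlands correspondence; it is here that the regularity of $\lambda$, equivalent to $\phi$ being discrete rather than merely tempered, is used.
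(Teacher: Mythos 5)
Your sketch is correct and is essentially the standard argument the paper relies on: the paper gives no proof of this proposition, instead citing Labesse, and then immediately states exactly the conclusion you derive, namely that infinitesimal characters correspond to $W_G$-orbits of Harish--Chandra parameters, equivalence classes to $W_K$-orbits, and hence each discrete series $L$-packet is identified with $W_G/W_K$. The one point you rightly flag as the real content --- that the $G^\vee$-conjugacy class of a discrete $L$-parameter remembers precisely the $W(\kg_\C,\kt_\C)$-orbit of $\lambda$ and nothing finer --- is exactly what the citation to Labesse (Section I.4, cf.\ Example \ref{ex L param SL2}) is covering, so your proposal fills in the intended proof rather than diverging from it.
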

Let $K<G$ be a maximal compact subgroup containing $T$.
Let $W_G$ and $W_K <W_G$ be the Weyl groups of the root systems of $(\kg^{\C}, \kt^{\C})$ and $(\kk^{\C}, \kt^{\C})$, respectively. Then
 two discrete series representations of $G$ have the same infinitesimal character if and only if their Harish-Chandra parameters are in the same $W_G$-orbit. They are equivalent if and only if they are in the same $W_K$-orbit, so every discrete series $L$-packet can be identified with $W_G/W_K$.
 
 In this paper, we will give a geometric proof of a special case of Theorem \ref{thm Shelstad}
 \begin{theorem} \label{thm char id}
 Theorem \ref{thm Shelstad} holds for $L$-packets of discrete series representations.
 \end{theorem}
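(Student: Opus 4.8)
The plan is to realise every discrete series representation in the two $L$-packets as the index of a Dirac operator on $G/K$ and on $G'/K'$, and to compute the character values $\Theta_\pi(h)$, $\Theta_{\pi'}(h')$ geometrically by means of the orbital-integral fixed-point formula established in \cite{Hochs-Wang}. The aim is to show that, once multiplied by the orientation sign, each side of \eqref{eq:Char.id} collapses to one and the same $W_G$-symmetric quantity attached to the complexified data $(\kg_{\C},\kt_{\C})$, the infinitesimal character, and the regular element $h$. Since $G$ and $G'$ live in a common $G_{\C}$ and share this data, the identity will then be immediate, and no appeal to the classical character formulae is needed.

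First I would fix, via Proposition~\ref{prop Lpack ds} and the discussion following it, a common Harish-Chandra parameter $\lambda$, identifying $\Pi_\phi$ with $W_G/W_K$ and $\Pi_{\phi'}$ with $W_{G'}/W_{K'}$. For each coset I realise the associated discrete series as a Dirac-induced class $\DInd$ in $K_0(C^*_r(G))$, built from the $\Spin^c$-structure on $G/K$ twisted by the weight determined by the coset representative. The trace $\tau_h$ on $C^*_r(G)$ given by the orbital integral at the elliptic regular element $h$ pairs with this $K$-theory class and, by \cite{Hochs-Wang}, computes $\Theta_\pi(h)$ as a contribution localised at the fixed-point set of $h$ acting on $G/K$. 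Because $h$ is regular and elliptic, this fixed-point set is controlled by the relative Weyl data: the localised terms take the shape $\det(w)\,e^{w\lambda}(h)$ divided by the Weyl denominator $\Delta(h)$, and assembling the $|W_K|$ contributions attached to a single coset produces exactly the value $\Theta_\pi(h)$ for that discrete series, directly out of the index computation.

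Next I would sum over the entire packet. Summing over $W_G/W_K$ promotes the partial Weyl sum over $W_K$ to the full sum over $W_G$, so that
\[
(-1)^{\frac{\dim G/K}{2}}\sum_{\pi\in\Pi_\phi}\Theta_\pi(h)=\frac{\sum_{w\in W_G}\det(w)\,e^{w\lambda}(h)}{\Delta(h)},
\]
an expression manifestly built only from the root system of $(\kg_{\C},\kt_{\C})$, the Weyl group $W_G$, the parameter $\lambda$, and the element $h$. Running the identical argument for $G'$ gives the same expression with $h$ replaced by the corresponding $h'$. Since $G$ and $G'$ are inner forms in the common complex group $G_{\C}$, they have the same root datum, the same Weyl group $W_G=W_{G'}$, the same infinitesimal character $\lambda$ attached to the shared $L$-parameter, and $h,h'$ are conjugate in $G_{\C}$; hence the two symmetric expressions coincide and \eqref{eq:Char.id} follows.

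The main obstacle is the geometric evaluation of the localised contribution together with the careful bookkeeping of the orientation sign $(-1)^{\frac{\dim G/K}{2}}$. The fixed-point manifolds $(G/K)^h$ and $(G'/K')^{h'}$, the twisting $\Spin^c$-data, and the half-dimensions $\dim G/K$ and $\dim G'/K'$ all differ between the two inner forms; indeed for the compact inner form the geometry degenerates to a point and one recovers a single Weyl character, with the packet $W_{G'}/W_{K'}$ trivial. The delicate point is therefore to show that the localised contributions are intrinsic to the complexified Cartan data, and that the entire discrepancy between $G$ and $G'$ is absorbed precisely into the sign prefactors. Establishing this compatibility uniformly across compact and noncompact inner forms is where the index-theoretic input from \cite{Hochs-Wang} carries the weight of the argument.
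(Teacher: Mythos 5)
Your strategy is sound and would yield \eqref{eq:Char.id} for discrete series packets, but it is a genuinely different route from the paper's --- and it is essentially the argument the paper sets out to avoid. You propose to evaluate each $\Theta_{\pi}(h)$ explicitly via the fixed-point formula of Theorem 2.1 in \cite{Hochs-Wang}, obtaining (up to the sign $(-1)^{\dim(G/K)/2}$) the closed form $\sum_{v \in W_K} \det(v)\, e^{v\lambda}(h)/\Delta(h)$, then sum over the packet to promote the $W_K$-sum to a $W_G$-sum and observe that the result depends only on the shared data $(\kg_{\C}, \kt_{\C}, R, W_G, \lambda)$ and on $h$ up to $G_{\C}$-conjugacy. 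That computation is precisely Harish-Chandra's character formula combined with Weyl's character formula and a rearrangement of terms; the paper explicitly acknowledges that this proof exists and deliberately does not use it. The paper instead (i) reduces to the case where $G'$ is the compact inner form $G_c$ (Theorem \ref{Harish-Chandra}); (ii) works with Dolbeault operators coupled to line bundles on $G/T$ and $G_c/T$ rather than Dirac operators on $G/K$; (iii) constructs a $T$-equivariant identification $\varphi$ of neighbourhoods of the fixed-point sets of $T$ in $M = G/T \times W_G/W_K$ and in $G_c/T$, matching the complex structures (twisted by $w^{-1}R^+$ on the component labelled by $[w]$) and the line bundles (Proposition \ref{prop phi}, Lemma \ref{lem L}); and (iv) invokes only the localisation property of the traced index (Proposition \ref{prop loc}), never an explicit local evaluation, together with Proposition \ref{prop ds index} and Borel--Weil--Bott to translate indices back into characters. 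Your approach buys brevity and uniformity over all inner forms at once; the paper's buys a proof in which no fixed-point contribution is ever computed, so the identity appears as a purely geometric excision statement rather than a cancellation of explicit formulae.

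If you pursue your route, two points need tightening. First, for a regular elliptic element the fixed-point set $(G/K)^h$ is the single point $eK$, not $|W_K|$ points; the $W_K$-sum arises from the weights of the isotropy action on the spinor and twisting data at that one point (or, as in the paper, from the $|W_K|$ fixed points of $T$ on $G/T$), so ``assembling the $|W_K|$ contributions'' must be phrased accordingly. Second, you use implicitly that the packets $\Pi_{\phi}$ and $\Pi_{\phi'}$ attached to corresponding $L$-parameters have the same infinitesimal character $\lambda$ under the chosen $G_{\C}$-conjugation of Cartan subgroups, and that $h \mapsto h'$ intertwines the weight lattices; these identifications are what make the two $W_G$-symmetric expressions literally equal, and they deserve an explicit statement rather than the word ``manifestly''.
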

 This case of Theorem \ref{thm Shelstad} can be proved by explicitly writing out Harish--Chandra's character formula for the discrete series and rearranging terms. Our proof does not involve character formulas, and will show how this case of the character identities is related to the geometry of the space $G/T$, and also how character identities are related to $K$-theory and index theory.

\begin{example}\label{ex L param SL2}
Let $G=\SL(2,\R)$, and  $K=T = \SO(2)$.   Let $\rho \in i\kt^*$ be the element mapping $\begin{bmatrix} {0} & {-1} \\{1} &{0} \end{bmatrix} $ to $i$. Let $n \in \N$, and set $\lambda = n\rho$. 
Let $\pi^{G}_{\pm\lambda}$ be the discrete series representation of $\SL(2, \R)$ with Harish-Chandra parameter $\pm\lambda$.

We have $G^{\vee} = \PGL(2,\C)$. The $L$-parameter of the two representations $\pi^{G}_{\pm\lambda}$, which are in the same $L$-packet, is the homomorphism
\[
\phi_n \colon W_{\R}\to \PGL(2,\C)\rtimes \Gal(\C/\R)
\]
defined as follows. Let $\sigma \in \Gal(\C/\R)$ be the nontrivial element, i.e.\ complex conjugation. 
For $r>0$ and $\theta \in \R$, set $\chi_n(re^{i\theta}) = re^{in\theta}$. Then 
\[
\begin{split}
\phi_n(j) &= \Bigl( \begin{bmatrix} 0 & (-1)^n \\ 1 & 0\end{bmatrix}, \sigma \Bigr)\\
\phi_n(z) &=  \Bigl( \begin{bmatrix} \chi_n(z) & 0 \\ 0 & \chi_{-n}(z)\end{bmatrix}, e\Bigr),
\end{split}
\]
for $z\in \C^{\times}$. (See Section I.4 of \cite{Labesse}.)
\end{example}

\subsection{An equivariant index and orbital integrals}
Let $G$ be a connected, real semisimple Lie group with finite centre. 
Suppose $G$ acts properly and isometrically on a Riemannian manifold $M$. Suppose $M/G$ is compact. Let $E \to M$ be a $\Z_2$-graded, $G$-equivariant, Hermitian vector bundle. Let $D$ be a $G$-equivariant, elliptic, self-adjoint first order differential operator on $E$ that is odd with respect to the grading. The \emph{reduced group $C^*$-algebra} $C^*_rG$ of $G$ is the closure in the operator norm of the algebra of all convolution operators on $L^2(G)$ by functions in $L^1(G)$.
Let  $K_0(C^*_rG)$ be its even $K$-theory. Then we have the equivariant index of $D$
\[
\ind_G(D) \in K_0(C^*_rG),
\]
which is the image of the class defined by $D$ in the equivariant $K$-homology group of $M$ under the analytic assembly map. See \cite{Connes94} for details.

Let $g \in G$ be a semisimple element, and $Z<G$ its centraliser. The orbital integral map
\[
f \mapsto \int_{G/Z}f(hgh^{-1})\, d(hZ)
\]
on $C_c(G)$, defines a trace map
\[
\tau_g \colon K_0(C^*_rG) \to \C.
\]
See Section 2.1 in \cite{Hochs-Wang}.

Theorem 2.1 in \cite{Hochs-Wang} is a fixed point formula for the number
\[
\tau_g(\ind_G(D)).
\]
We will not use this fixed point formula, but only the following localisation or excision property of the index.
This property follows directly from Theorem 2.1 in \cite{Hochs-Wang}, but is in fact a step in its proof: see Proposition 4.6 in \cite{Hochs-Wang}.
\begin{proposition}[Localisation of the index] \label{prop loc}
Let $G'$, $M'$, $E'$ and $D'$ be as $G$, $M$, $E$ and $D$ above, respectively. Suppose that $G$ and $G'$ are subgroups of some larger group, and that $g \in G \cap G'$. Suppose that there are $g$-invariant neighbourhoods $U$ of the fixed point set $M^g$ and $U'$ of $(M')^g$, and a diffeomorphism $\varphi\colon U \to U'$ that commutes with $g$, such that
\[
\varphi^*(E'|_{U'}) = E|_U,
\]
and $D'|_{U'}$ corresponds to $D|_U$ under this identification. Then
\[
\tau_g(\ind_G(D)) = \tau_g(\ind_{G'}(D')).
\]
\end{proposition}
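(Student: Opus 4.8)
The plan is to deduce the localisation property directly from the fixed point formula of Theorem 2.1 in \cite{Hochs-Wang}, exploiting the fact that the orbital integral $\tau_g(\ind_G(D))$ depends only on geometric data supported near the fixed point set $M^g$. The key conceptual point is that $\tau_g$ is a \emph{localised} trace: evaluating it on the index class extracts a contribution that, after the reduction carried out in the proof of the fixed point formula, is an integral of characteristic data over $M^g$ (or over $M^g/Z_g$, where $Z_g$ is the centraliser of $g$). Since the hypotheses provide a $g$-equivariant diffeomorphism $\varphi\colon U \to U'$ between neighbourhoods of $M^g$ and $(M')^g$ matching the bundles $E$, $E'$ and the operators $D$, $D'$, the two fixed point integrands will agree term by term, forcing equality of $\tau_g(\ind_G(D))$ and $\tau_{g}(\ind_{G'}(D'))$.

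Concretely, I would proceed in three steps. First, I would invoke Proposition 4.6 of \cite{Hochs-Wang}, which the authors explicitly flag as the relevant excision step underlying Theorem 2.1, to express $\tau_g(\ind_G(D))$ purely in terms of the restriction of the whole geometric package $(M,E,D)$ to an arbitrarily small $g$-invariant neighbourhood of the fixed point set $M^g$. This is the crucial reduction: it says the orbital integral of the index is insensitive to the geometry away from $M^g$. Second, I would do the same for $G'$, writing $\tau_g(\ind_{G'}(D'))$ in terms of a neighbourhood of $(M')^g$. Third, I would use the diffeomorphism $\varphi$ to transport the local data on $U'$ to $U$; by hypothesis $\varphi^*(E'|_{U'}) = E|_U$ and $D'|_{U'}$ corresponds to $D|_U$, and $\varphi$ intertwines the $g$-actions, so the two localised expressions are literally identified. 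Composing these identifications yields the asserted equality.

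The main obstacle I anticipate is bookkeeping rather than conceptual difficulty: one must check that the localised expression produced by Proposition 4.6 is genuinely \emph{intrinsic} to an invariant neighbourhood of the fixed point set, i.e.\ that it involves no global choices (such as a global metric, connection, or cutoff) that fail to be matched by $\varphi$. In particular, the fixed point formula typically involves the normal bundle of $M^g$, the action of $g$ on it, and the symbol of $D$ along $M^g$, all of which are local near $M^g$; one has to verify that the trace $\tau_g$, defined via the orbital integral over $G/Z$ and a priori a global object on $C^*_rG$, reduces after the assembly map to exactly such local data. Granting the structure of the proof of Theorem 2.1, this is precisely the content of its excision step, so the role of the present proposition is simply to isolate that step and rephrase it as a comparison between two groups sharing a common fixed-point geometry.

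Since $\varphi$ need only match the data on neighbourhoods of the fixed point sets and commute with $g$, I would finally remark that the equality is robust under shrinking $U$ and $U'$, so one may take them as small as needed to invoke the localisation; this guarantees that the comparison is valid whenever the fixed-point geometries of the two $G$- and $G'$-spaces agree, which is exactly the situation that will arise when $M = G/T$ and $M' = G'/T'$ for inner forms sharing a compact Cartan subgroup.
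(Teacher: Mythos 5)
Your proposal is correct and follows essentially the same route as the paper, which offers no independent argument but simply cites the excision step (Proposition 4.6 of \cite{Hochs-Wang}) underlying the fixed point theorem, exactly as you do. Your additional discussion of why the localised expression is intrinsic to a neighbourhood of $M^g$ is a reasonable elaboration of the same point, not a different method.
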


\subsection{Discrete series characters as indices}

Let $K<G$ be maximal compact. Let $T<K$ be a maximal torus and suppose that $T$ is a Cartan subgroup of $G$; i.e.\ $G$ has a discrete series. Let 
\[
R := R(\kg^{\C}, \kt^{\C}).
\]
be the root system of $(\kg^{\C}, \kt^{\C})$.
Choose a positive system $R^+\subset R$, and let $\rho$ be half the sum of the elements of $R^+$. Let $\lambda \in i\kt^*$ be regular, and dominant with respect to $R^+$. Suppose $\lambda - \rho$ is integral. Let $\pi^G_{\lambda}$ be the discrete series representation of $G$ with Harish-Chandra parameter $\lambda$. The values of its character $\Theta_{\pi^G_{\lambda}}$ on the regular elements of $T$ can be realised in terms of index theory.

%
%

Consider the $G$-manifold $G/T$, equipped with the $G$-invariant Riemannian metric defined by a $K$-invariant inner product on $\kg$. Consider the $G$-invariant complex structure $J^{G/T}_{R^+}$ on $G/T$ such that, as complex vector spaces,
\beq{eq def J}
T_{eT}(G/T) = \kg/\kt \cong \bigoplus_{\alpha \in R^+} \kg^{\C}_{\alpha}.
\eeq
Explicitly, this isomorphism is given by the inclusion $\kg/\kt \hookrightarrow (\kg/\kt)^{\C}$ followed by projection onto the positive root spaces. Similarly, if $w \in W_G$, then we will write $J^{G/T}_{wR^+}$ for the complex structure defined as above, with $R^+$ replaced by $wR^+$. 

For any integral element $\nu \in i\kt^*$, we have the $G$-equivariant line bundle
\[
L^G_{\nu} := G\times_{T}\C_{\nu}\to G/T.
\]
Here we write $\C_{\nu}$ for the vector space $\C$ on which $T$ acts with weight $\nu$. Let $\bar \partial_{\LG}$ be the Dolbeault operator on $G/T$ coupled to $\LG$. Let $\bar \partial_{\LG}^*$ be its formal adjoint with respect to the $L^2$-inner product defined by the Riemannian metric on $G/T$ and the natural Hermitian metric on $\LG$.
\begin{proposition}\label{prop ds index}
If  $g \in T^{\reg}$, then
\[
\Theta_{\pi^G_{\lambda}}(g) = (-1)^{\dim(G/K)/2}\tau_g(\ind_G(\bar \partial_{\LG}+\bar \partial_{\LG}^*)).
\]
\end{proposition}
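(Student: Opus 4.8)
\emph{Proof proposal.} The plan is to factor the claimed equality through the single identity
\[
\ind_G(\bar\partial_{\LG} + \bar\partial_{\LG}^*) = (-1)^{\dim(G/K)/2}\,[\pi^G_{\lambda}] \qquad \text{in } K_0(C^*_rG),
\]
where $[\pi^G_{\lambda}]$ denotes the class of the projection onto the discrete series representation $\pi^G_{\lambda}$ sitting inside $L^2(G)$, together with the evaluation
\[
\tau_g\bigl([\pi^G_{\lambda}]\bigr) = \Theta_{\pi^G_{\lambda}}(g), \qquad g \in T^{\reg}.
\]
Granting these, the proposition follows at once by applying $\tau_g$ to the first identity and using that $(-1)^{\dim(G/K)/2}$ squares to $1$.

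For the index identity I would argue as follows. The $G$-action on $G/T$ is proper and cocompact, so the analytic index lives in $K_0(C^*_rG)$ as in \cite{Connes94}. Because $\lambda$ is regular, Parthasarathy's inequality provides a spectral gap at $0$ for the relevant Laplacian, so the index is represented by the formal difference $\sum_q (-1)^q [H^q]$ of the reduced $L^2$-Dolbeault cohomology groups of $G/T$ with coefficients in $\LG$. By Schmid's realisation of the discrete series (the non-compact counterpart of Borel--Weil--Bott), for $\lambda$ dominant and regular with $\lambda-\rho$ integral these groups vanish in every degree except one, $q_0$, where the surviving space is unitarily equivalent to $\pi^G_{\lambda}$; hence it is a genuine summand of $L^2(G)$ and defines a class in $K_0(C^*_rG)$. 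The concentration degree is $q_0 = \dim(G/K)/2$, the number of positive noncompact roots, which produces the sign. This is consistent with Example~\ref{ex L param SL2}: there $G/T = G/K$ is the upper half plane, $\dim(G/K)/2 = 1$, and $\pi^G_{\lambda}$ is realised in $H^1$. Equivalently, one may read this identity off the Connes--Kasparov picture, in which Dolbeault (Dirac) induction sends the weight $\lambda - \rho$ to the discrete series class.

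The second ingredient identifies $\tau_g$ on a discrete series class with a character value. The projection onto $\pi^G_{\lambda}$ in $L^2(G)$ has convolution kernel a multiple of a matrix coefficient of $\pi^G_{\lambda}$, and the orbital integral over the regular elliptic element $g$ of such a matrix coefficient returns the Harish-Chandra character value $\Theta_{\pi^G_{\lambda}}(g)$; this is the classical relation between elliptic orbital integrals and discrete series characters, and with the normalisation of the invariant measure built into $\tau_g$ it yields $\tau_g([\pi^G_{\lambda}]) = \Theta_{\pi^G_{\lambda}}(g)$ with no formal degree factor (the formal degree instead appears as $\tau_e([\pi^G_{\lambda}])$, consistent with the singularity of $\Theta_{\pi^G_{\lambda}}$ at the identity). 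I expect the main obstacle to be the first step: establishing that the reduced $C^*$-algebraic index is exactly the single discrete series class, concentrated in degree $\dim(G/K)/2$, rests on Schmid's hard vanishing-and-realisation theorem and on matching the complex-structure and orientation conventions so that the sign emerges as $(-1)^{\dim(G/K)/2}$ and not its negative. Fixing the measure normalisation in $\tau_g$ so that the second step carries no stray constant is a secondary point, expected to be routine given the framework of \cite{Hochs-Wang}.
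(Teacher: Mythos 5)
Your proposal is correct and follows essentially the same route as the paper: the paper proves this proposition by citing Propositions 5.1 and 5.2 of \cite{Hochs-Wang}, which are exactly your two ingredients --- the identity $[\pi^G_{\lambda}] = (-1)^{\dim(G/K)/2}\ind_G(\bar \partial_{\LG}+\bar \partial_{\LG}^*)$ in $K_0(C^*_rG)$ (resting on the $L^2$-Dolbeault realisation of the discrete series in degree $\dim(G/K)/2$) and the evaluation $\tau_g([\pi^G_{\lambda}]) = \Theta_{\pi^G_{\lambda}}(g)$ via elliptic orbital integrals of matrix coefficients. The extra detail you supply about Schmid's theorem and the spectral gap is consistent with how those cited results are established.
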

For a proof,  see Propositions 5.1 and 5.2 in \cite{Hochs-Wang}. This proof is based on the fact that the natural class $[\pi_{\lambda}^G] \in K_0(C^*_rG)$ (see e.g.\  \cite{Lafforgue02}) equals 
\[
(-1)^{\dim(G/K)/2}\ind_G(\bar \partial_{\LG}+\bar \partial_{\LG}^*).
\]

\section{A geometric proof} \label{sec proof}

Let $G$ be a connected, real semisimple Lie group with finite centre. Let $K<G$ be a maximal compact subgroup, and suppose a maximal torus $T<K$ is a Cartan subgroup of $G$. As before, Let 
$
R 
$
be the root system of $(\kg^{\C}, \kt^{\C})$,  let 
 $R^+\subset R$ be a positive system, and let $\rho$ be half the sum of the elements of $R^+$. 
Since $G$ has a compact Cartan subgroup, it has a compact inner form $G_c$ by Theorem \ref{Harish-Chandra}. Theorem \ref{thm char id} follows from the case where $G'=G_c$, which we will assume from now on. The complexifications $\kg^{\C}$ and $\kg_c^{\C}$ of the Lie algebras $\kg$ and $\kg_c$ of $G$ and $G_c$, respectively, are equal. So if we identify $T$ with a $G_{\C}$-conjugate Cartan subgroup $T_c <G_c$, then 
the root system of $(\kg_c^{\C}, \kt_c^{\C})$ equals $R$. From now on, we  will tacitly identify $T$ and $T_c$ with each other.

\subsection{Identifying open sets} \label{sec nghbhd}

The geometric proof of Theorem \ref{thm char id} is based on suitable identifications of neighbourhoods of the fixed point sets of the actions by $T$ on $G/T$ and $G_c/T$.

Let $w \in N_{G_c}(T)$. Consider the $T$-invariant complex structure 
on $\kg/\kt$ defined by ${w^{-1}R^+}$ and the one on $\kg_c/\kt$ defined by ${R^+}$. 
Let $\kt^{\perp} \subset \kg$  be the orthogonal complement to $\kt$ in $\kg$ with respect to an $\Ad(K)$-invariant inner product. Similarly, let $\kt^{\perp_c} \subset \kg_c$ be the orthogonal complement to $\kt$ in $\kg_c$. 
Then we  have a complex-linear isomorphism
\beq{eq gt gct}
\psi_w\colon
\kt^{\perp} \cong
\kg/\kt \cong \bigoplus_{\alpha \in R^+} \kg^{\C}_{w^{-1}\alpha} 
\xrightarrow{\Ad(w^{-1})} \\
\bigoplus_{\alpha \in R^+} \kg^{\C}_{\alpha} \cong \kg_c/\kt
 \cong \kt^{\perp_c}.
\eeq
Note that $\Ad(w)$ preserves $\kt^{\perp_c}$.
\begin{lemma}\label{lem Adw psi equivar}
The composition $\Ad(w) \circ \psi_w\colon \kt^{\perp} \to \kt^{\perp_c}$ is $T$-equivariant. 
\end{lemma}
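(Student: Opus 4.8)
The plan is to use the fact that every isomorphism appearing in the composition \eqref{eq gt gct} defining $\psi_w$ is $T$-equivariant, with the single exception of the middle map $\Ad(w^{-1})$, and that the twist introduced by this map is undone exactly by the final application of $\Ad(w)$.

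First I would check the equivariance of the four outer maps. The two identifications of a quotient $\kg/\kt$ or $\kg_c/\kt$ with an orthogonal complement $\kt^{\perp}$ or $\kt^{\perp_c}$ are $T$-equivariant, because the inner products are $\Ad(K)$- and $\Ad(G_c)$-invariant and hence $\Ad(T)$-invariant, while $\kt$ is $T$-invariant. The two isomorphisms induced by the complex structures are inclusions into the complexification followed by projection onto a sum of root spaces; as root spaces are $T$-weight spaces, these projections commute with $\Ad(T)$. Hence only $\Ad(w^{-1})$ can fail to intertwine the $T$-actions.

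Next I would record the defect of the middle map. For $t \in T$, the homomorphism property of $\Ad$ gives $\Ad(w^{-1})\Ad(t) = \Ad(w^{-1}tw)\Ad(w^{-1})$, and $w^{-1}tw \in T$ precisely because $w \in N_{G_c}(T)$. Commuting $\Ad(t)$ through the equivariant outer maps and inserting this identity in the middle produces the twisted equivariance
\[
\psi_w \circ \Ad(t) = \Ad(w^{-1}tw) \circ \psi_w \qquad (t \in T),
\]
where on the right-hand side $\Ad(w^{-1}tw)$ acts on $\kt^{\perp_c}$, which it preserves since $\Ad(w)$ preserves $\kt^{\perp_c}$ as already noted before the statement.

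Finally, post-composing with $\Ad(w)$ and using $\Ad(w)\Ad(w^{-1}tw) = \Ad(t)\Ad(w)$ turns this twisted equivariance into genuine equivariance:
\[
(\Ad(w)\circ\psi_w)\circ \Ad(t) = \Ad(t)\circ(\Ad(w)\circ\psi_w).
\]
There is no serious obstacle here; the only point requiring care is the bookkeeping of the conjugation $t \mapsto w^{-1}tw$ and the repeated use of $w \in N_{G_c}(T)$ to keep the twisted elements inside $T$.
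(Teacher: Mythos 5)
Your proof is correct and follows essentially the same route as the paper's: both arguments rest on the $T$-equivariance of the root-space projections/identifications and on the relation $\Ad(w^{-1})\Ad(t)=\Ad(w^{-1}tw)\Ad(w^{-1})$ with $w^{-1}tw\in T$ because $w\in N_{G_c}(T)$. The paper merely organises the same bookkeeping through the projection maps $p_{\tilde R^+}$ instead of your ``twisted equivariance'' formulation, so there is nothing substantive to add.
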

\begin{proof}
Let $X \in \kt^{\perp}$. For any choice $\tilde R^+$ of positive roots, let  
\[
p_{\tilde R^+}\colon \kg^{\C}/\kt^{\C} \to  \bigoplus_{\alpha \in \tilde R^+} \kg^{\C}_{\alpha} 
\]
be the projection map defined by the root space decomposition. Then
\[
\Ad(w)\circ p_{R^+} = p_{w^{-1}R^+}\circ \Ad(w).
\]

By definition of $\psi_w$,
\[
p_{R^+}(\psi_w(X)) = \Ad(w^{-1})p_{w^{-1}R^+}(X).
\]
Hence
\[
 p_{w^{-1}R^+}( \Ad(w)\psi_w(X)) = \Ad(w)p_{R^+}(\psi_w(X)) = p_{w^{-1}R^+}(X).
\]
Since $p_{w^{-1}R^+}$ is $T$-equivariant, we have for all $h \in T$,
\[
 p_{w^{-1}R^+}(\Ad(h) \Ad(w)\psi_w(X)) = p_{w^{-1}R^+}(\Ad(h)X) = p_{w^{-1}R^+}( \Ad(w)\psi_w(\Ad(h)X)).
\]
So $\Ad(w)\circ \psi_w$ is indeed $T$-equivariant.
\end{proof}

The map
$\eta\colon \kt^{\perp}\to G/T$
mapping $X \in \kt^{\perp}$ to $\exp(X)T$ is $T$-equivariant, and a local diffeomorphism near $0 \in \kt^{\perp}$. 
Let $\tilde U \subset \kt^{\perp}$ be an $\Ad(T)$-invariant open neighbourhood of $0$ on which this map defines a diffeomorphism onto its image $U \subset G/T$. The tangent map of $\eta$ at $0$ is the identification $\kt^{\perp} \cong T_{eT}G/T$, which is complex-linear by definition. So $\eta^*(J^{G/T}_{w^{-1}R^+}|_{U})$ is homotopic to $J_{w^{-1}R^+}|_{\tilde U}$ as $T$-invariant complex structures, if we choose $\tilde U$ small enough. So $\eta$ is holomorphic up to a homotopy of $T$-invariant almost complex structures, which is as good as being holomorphic for index theory purposes.

Let $\eta_c\colon \kt^{\perp_c} \to G_c/T$ be defined by $\eta_c(Y) = \exp(Y)T$ for $Y \in \kt^{\perp_c}$. This is a $T$-equivariant map, and a local diffeomorphism near $0$. Choose $\tilde U$ small enough, so that
 $\eta_c|_{\tilde \psi_w(\tilde U)}$ is a diffeomorphism onto its image $U_c^w$, which is a neighbourhood of $eT$ in $G_c/T$. Again, this diffeomorphism is 
 holomorphic up to a $T$-equivariant homotopy of complex structures if we choose $\tilde U$ small enough.
For later use, we define the  map $\eta_c^w\colon \kt^{\perp_c}\to G_c/T$  by $\eta_c^w(Y) = \exp(Y)wT$, for $Y \in \kt^{\perp_c}$. This map is $T$-equivariant because $w$ normalises $T$.  
 
 The maps $\psi_w$, $\eta$ and $\eta_c$ combine into a  diffeomorphism
\[
\varphi_w\colon U \xrightarrow{\eta^{-1}} \tilde U \xrightarrow{\psi_w} \psi_w(U) \xrightarrow{\eta_c} U_c^w,
\]
holomorphic up to a homotopy of complex structures. The map $\psi_w$ is not $T$-equivariant, so neither is $\varphi_w$ in general.
\begin{lemma}\label{lem T equivar}
The composition
\[
w\circ \varphi_w\colon U \to wU_c^w
\]
is $T$-equivariant.
\end{lemma}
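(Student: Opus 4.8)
The statement to prove is that $w \circ \varphi_w\colon U \to wU_c^w$ is $T$-equivariant, where $\varphi_w = \eta_c \circ \psi_w \circ \eta^{-1}$. The key observation is that we have already established in Lemma~\ref{lem Adw psi equivar} that the linear map $\Ad(w)\circ \psi_w\colon \kt^{\perp}\to \kt^{\perp_c}$ is $T$-equivariant. The plan is to reduce the equivariance of $w\circ \varphi_w$ at the level of $G_c/T$ to this linear statement by tracking how each of the three constituent maps $\eta^{-1}$, $\psi_w$, and $\eta_c$ interacts with the $T$-action, and in particular to convert the left-multiplication by $w$ on $G_c/T$ into the adjoint action $\Ad(w)$ on $\kt^{\perp_c}$.

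First I would write out the composite explicitly on a point $\exp(X)T \in U$ with $X \in \tilde U \subset \kt^{\perp}$. Applying $\eta^{-1}$ gives $X$, applying $\psi_w$ gives $\psi_w(X) \in \kt^{\perp_c}$, and applying $\eta_c$ gives $\exp(\psi_w(X))T \in G_c/T$. Left-multiplying by $w$ then yields $w\exp(\psi_w(X))T$. The crucial step is to rewrite this using the conjugation identity $w\exp(Y) = \exp(\Ad(w)Y)\,w$, valid for $Y \in \kg_c$; taking $Y = \psi_w(X)$ gives
\[
w\exp(\psi_w(X))T = \exp(\Ad(w)\psi_w(X))\,wT = \eta_c^w(\Ad(w)\psi_w(X)),
\]
where I use the map $\eta_c^w(Y) = \exp(Y)wT$ introduced just before the lemma, and the fact that $\Ad(w)$ preserves $\kt^{\perp_c}$ (noted after~\eqref{eq gt gct}). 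Thus $w\circ\varphi_w$ factors as $\eta_c^w \circ (\Ad(w)\circ\psi_w)\circ\eta^{-1}$.

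It then remains to check that each factor in this composition is $T$-equivariant. The map $\eta^{-1}$ is $T$-equivariant since $\eta(X)=\exp(X)T$ is, and $\tilde U$ was chosen $\Ad(T)$-invariant; the middle map $\Ad(w)\circ\psi_w$ is $T$-equivariant by Lemma~\ref{lem Adw psi equivar}; and $\eta_c^w$ is $T$-equivariant as already remarked, because $w$ normalises $T$ (so that $h\exp(Y)wT = \exp(\Ad(h)Y)hwT = \exp(\Ad(h)Y)w(w^{-1}hw)T = \exp(\Ad(h)Y)wT$ for $h \in T$). A composition of $T$-equivariant maps is $T$-equivariant, which gives the claim. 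The only genuinely substantive point is the conjugation rewriting that trades left-multiplication by $w$ for $\Ad(w)$ and thereby lands us exactly in the hypothesis of Lemma~\ref{lem Adw psi equivar}; everything else is bookkeeping about the $T$-invariance of the chosen neighbourhoods and the fact that $w \in N_{G_c}(T)$. I expect the main obstacle to be purely notational: ensuring the domains match up so that $\Ad(w)\psi_w(X)$ lies in the open set on which $\eta_c^w$ is the relevant diffeomorphism, which is handled by shrinking $\tilde U$ as in the construction of $\varphi_w$.
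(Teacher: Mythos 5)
Your proof is correct and follows essentially the same route as the paper: both rest on the identity $w\circ\eta_c = \eta_c^w\circ \Ad(w)$ (which you justify via $w\exp(Y)=\exp(\Ad(w)Y)w$), the resulting factorisation $w\circ\varphi_w = \eta_c^w\circ(\Ad(w)\circ\psi_w)\circ\eta^{-1}$, and the $T$-equivariance of each factor, with the middle one supplied by Lemma~\ref{lem Adw psi equivar}. Your version merely makes explicit some verifications the paper leaves to the reader.
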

\begin{proof}
We have
\[
w\circ \eta_c = \eta_c^w \circ \Ad(w).
\]
Hence
\[
w\circ \varphi_w = \eta_c^w \circ (\Ad(w) \circ \psi_w) \circ \eta^{-1}.
\]
The maps $\eta_c^w$ and $\eta$ are $T$-equivariant, and by
Lemma \ref{lem Adw psi equivar}, so is
$\Ad(w) \circ \psi_w$. So  $w\circ \varphi_w$ is $T$-equivariant as well.
\end{proof}

%



\subsection{Neighbourhoods of fixed point sets}

Let $W_K := N_K(T)/T$ and $W_G := N_{G_c}(T)/T$. Then $W_G$ is the Weyl group of the root system $R$, which explains the notation $W_G$. And $W_K$ is the subgroup of $W_G$ generated by reflections defined by compact roots. 
Note that
\[
(G/T)^T = W_K 
\]
(see Lemma 6.7 in \cite{Hochs-Wang.AK}), and
\[
(G_c/T)^T =  W_G.
\]
Hence we have an inclusion
\[
(G/T)^T \hookrightarrow (G_c/T)^T 
\]
and
\[
(G_c/T)^T = \coprod_{[w] \in W_G/W_K} w\cdot (G/T)^T.
\]
Here we fix  representatives $w \in W_G$ of all classes $[w] \in W_G/W_K$ once and for all. 

Consider the manifold
\[
M := G/T \times W_G/W_K
\]
equipped with the action by $G$ on the first factor.
Consider the $G$-invariant complex structure on $M$ such that, for every $[w] \in W_G/W_K$, the restricted complex structure on $G/T\times \{[w]\}$ is $J^{G/T}_{w^{-1}R^+}$. 

We have $T$-invariant neighbourhoods
\[
V := \bigcup_{w_K \in W_K} w_K U
\]
of $(G/T)^T$ and 
\[
V_c := \bigcup_{w \in W_G} wU_c^w
\]
of $(G_c/T)^T$. We choose the set $\tilde U$ small enough, the sets $w_K U$ and $wU_c^w$ are all disjoint, for $w_K \in W_K$ and $w \in W_G$.  Consider the map
\[
\varphi\colon V \times W_G/W_K \to V_c
\]
given by
\beq{eq def phi}
\varphi(w_K xT, [w]) = w_K w\varphi_w(xT),
\eeq
for $w_K \in W_K$, $w \in W_G$ and $xT \in U$. 
\begin{lemma}
The map $\varphi$ is well-defined and $T$-equivariant.
\end{lemma}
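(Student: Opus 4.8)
The plan is to reduce both assertions to the $T$-equivariance of $w\circ\varphi_w$ proved in Lemma~\ref{lem T equivar}, once the ambiguities hidden in the symbol $\varphi(w_K xT,[w])$ are isolated. The second argument causes no trouble: we always use the representatives $w\in W_G$ of the classes $[w]\in W_G/W_K$ fixed once and for all, and $\varphi_w$ is built from this chosen $w$. All the subtlety sits in the first argument. Since the cells $w_K U$ ($w_K\in W_K$) are disjoint, a point $v\in V$ determines its class $w_K\in W_K$ uniquely; the only remaining freedom is the choice of a representative $n\in N_K(T)$ of $w_K$, and replacing $n$ by $nt$ ($t\in T$) replaces $xT=n^{-1}v$ by $t^{-1}\cdot(xT)$, which again lies in $U$ because $U=\eta(\tilde U)$ is $T$-invariant. (This reflects the fact that left translation by $N_K(T)$ does not descend to an action of $W_K$ on $G/T$.)

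First I would settle well-definedness. For the two decompositions $v=n\,(xT)=(nt)\,(t^{-1}\cdot xT)$, the defining formula produces $n\,w\varphi_w(xT)$ and $nt\,w\varphi_w(t^{-1}\cdot xT)$, respectively. By Lemma~\ref{lem T equivar}, $w\varphi_w(t^{-1}\cdot xT)=t^{-1}\cdot\bigl(w\varphi_w(xT)\bigr)=t^{-1}w\varphi_w(xT)$, so the second expression collapses to $n\,w\varphi_w(xT)$, matching the first. Hence $\varphi$ does not depend on the chosen representative. I would also note that $w_K w\varphi_w(xT)\in(w_Kw)U_c^w\subset V_c$ by construction of the cells $U_c^w$, so that $\varphi$ genuinely maps into $V_c$.

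Next I would verify $T$-equivariance, with $T$ acting on $V\times W_G/W_K$ through the factor $V\subset G/T$ (trivially on the discrete factor $W_G/W_K$) and on $V_c\subset G_c/T$ in the standard way. The computation rests on the identity $t w_K=w_K\Ad(w_K^{-1})(t)$ in $N_{G_c}(T)$. Writing $t':=\Ad(w_K^{-1})(t)\in T$, this gives $t\cdot v=tw_K xT=w_K\,(t'\cdot xT)$, which exhibits $t\cdot v$ in the same cell $w_K U$ with $xT$ replaced by $t'\cdot xT$. Then, using the definition and Lemma~\ref{lem T equivar} once more,
\[
\varphi(t\cdot v,[w])=w_K w\varphi_w(t'\cdot xT)=w_K t'\,w\varphi_w(xT)=t\,w_K w\varphi_w(xT)=t\cdot\varphi(v,[w]).
\]

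The argument is essentially a bookkeeping exercise once Lemma~\ref{lem T equivar} is available; the one point demanding care—and the main (if minor) obstacle—is tracking how the choice of representative of $w_K\in W_K$ interacts with the $T$-action, encapsulated in the relation $t w_K=w_K\Ad(w_K^{-1})(t)$. It is precisely this relation that converts a left $T$-translation on $G/T$ into a $T$-translation to which the equivariance of $w\circ\varphi_w$ applies, and it is the same mechanism that makes the two potential sources of ambiguity cancel in the well-definedness check.
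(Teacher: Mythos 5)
Your argument is correct and follows the same route as the paper's proof: disjointness of the cells $w_K U$ to pin down the $W_K$-component, the $T$-equivariance of $w\circ\varphi_w$ from Lemma~\ref{lem T equivar}, and the relation $tw_K = w_K\Ad(w_K^{-1})(t)$ coming from $w_K \in N_K(T)$. You merely spell out the bookkeeping (and the check that the image lands in $V_c$) more explicitly than the paper does.
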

\begin{proof}
Let $w_K, w_K'\in N_K(T)$ and $x,x'\in G$ be such that $w_K xT = w_K' x'T$. Fix $w\in W_G$. Since the sets $w_K U$ and $w_K'U$ are disjoint if $w_K$ and $w_K'$ represent different elements of $W_K$, there is a $t\in T$ such that $w_K' = w_K t$. There is another $t' \in T$ such that $w_K' x' = w_K x t'$. Using $T$-equivariance of $w\varphi_w$ and the fact that $w_K$ normalises $T$, one deduces that   $w_K' w\varphi_w(x'T) = w_K w\varphi_w(xT)$, and that $\varphi$ is $T$-equivariant.
\end{proof}
 The arguments in this section lead to the following conclusion.
\begin{proposition}\label{prop phi}
The map $\varphi$ is a $T$-equivariant diffeomorphism, holomorphic up to a homotopy of complex structures, from a $T$-invariant neighbourhood of $M^T$ onto a $T$-invariant neighbourhood of $(G_c/T)^T$.
\end{proposition}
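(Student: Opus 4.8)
The plan is to verify the four assertions—diffeomorphism, $T$-equivariance, the neighbourhood statement, and holomorphicity up to homotopy—by working one connected piece of the domain at a time and then assembling. Since we may take $\tilde U$ to be a small $\Ad(T)$-invariant ball, hence connected, the sets $w_K U$ are disjoint and the connected components of the domain $V\times W_G/W_K$ are exactly the open pieces $(w_K U)\times\{[w]\}$ for $w_K\in W_K$ and $[w]\in W_G/W_K$. On such a piece the defining formula $\varphi(w_K xT,[w]) = w_K w\,\varphi_w(xT)$ exhibits $\varphi$ as the composition of left translation by $w_K^{-1}$ (a diffeomorphism $w_K U\to U$), the map $\varphi_w\colon U\to U_c^w$, and left translation by $w_K w$; each factor is a diffeomorphism, so $\varphi$ restricts to a diffeomorphism of $(w_K U)\times\{[w]\}$ onto $w_K w\,U_c^w$. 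Together with the well-definedness and $T$-equivariance already established (using Lemma \ref{lem T equivar} for $w\circ\varphi_w$), this reduces the proposition to a global injectivity/surjectivity statement and a compatibility of complex structures.

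The key combinatorial point is that the assignment $(w_K,[w])\mapsto w_K w\in W_G$ is a bijection onto $W_G=(G_c/T)^T$. I would arrange this by choosing the fixed representatives $w$ of the classes $[w]\in W_G/W_K$ to form a common transversal, i.e.\ a set that simultaneously represents the left and the right cosets of $W_K$ in $W_G$ (such a transversal always exists for a subgroup of a finite group). Then the right cosets $W_K w$ partition $W_G$, so each $v\in W_G$ is uniquely of the form $v=w_K w$, and the centres $w_K w\,T$ of the image pieces run bijectively over the fixed point set $(G_c/T)^T$. Because each $U_c^w$ is a small neighbourhood of $eT$, the piece $w_K w\,U_c^w$ is a small neighbourhood of the fixed point $w_K w\,T$; shrinking $\tilde U$ if necessary makes these pieces pairwise disjoint. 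Disjointness of the images together with the per-piece diffeomorphism property gives global injectivity, so $\varphi$ is a diffeomorphism onto the open set $\bigcup_{w_K,[w]} w_K w\,U_c^w$. Since this union is open and contains a neighbourhood of every fixed point $vT$, it is a $T$-invariant open neighbourhood of $(G_c/T)^T$, as required.

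Finally, holomorphicity up to homotopy is inherited factor by factor. On the slice $G/T\times\{[w]\}$ the complex structure of $M$ is $J^{G/T}_{w^{-1}R^+}$, which is $G$-invariant, so left translation by $w_K^{-1}$ is holomorphic; the target carries the single $G_c$-invariant complex structure $J^{G_c/T}_{R^+}$, so left translation by $w_K w$ is holomorphic as well; and $\varphi_w$ is holomorphic up to a $T$-equivariant homotopy of complex structures by its very construction from $\eta$, $\psi_w$ and $\eta_c$. Composing, $\varphi$ is holomorphic up to a homotopy of $T$-invariant complex structures on each piece, and these assemble over the disjoint pieces. I expect the only genuine obstacle to be the global bookkeeping in the second step: checking that the $|W_G|$ image pieces are pairwise disjoint and jointly cover a full neighbourhood of $(G_c/T)^T$, which is exactly what forces the choice of representatives as a two-sided transversal together with a sufficiently small $\tilde U$. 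The local statements, by contrast, follow immediately from the lemmas already proved.
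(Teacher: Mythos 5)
Your argument is correct and follows essentially the same route as the paper, which states the proposition as a summary of the preceding construction: piecewise analysis on the components $(w_K U)\times\{[w]\}$, $T$-equivariance via Lemma \ref{lem T equivar} and the well-definedness lemma, disjointness of the image pieces for $\tilde U$ small, and holomorphicity up to homotopy inherited from $\varphi_w$ and the invariance of the complex structures. Your observation that the representatives $w$ should be chosen as a common (two-sided) transversal of $W_K$ in $W_G$ --- so that the right cosets $W_K w$, which index the image fixed points $w_K w T$, partition $W_G$ just as the left cosets $wW_K$ index the components of $(G_c/T)^T$ --- is a genuine refinement of a point the paper leaves implicit.
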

See Figure \ref{fig SL2} for an example of the map $\varphi$.

\begin{remark}
If one replaces the map $\varphi_w$ by $\varphi_e$ in \eqref{eq def phi}, one obtains an identification $\tilde \varphi$ of neighbourhoods of $M^T$ and $(G_c/T)^T$ that is simpler than the map $\varphi$ we use here. However, $\tilde \varphi$ is not $T$-equivariant, which is the reason why we use the maps $\varphi_w$ rather than $\varphi_e$. At the same time, using the maps $\varphi_w$ means that the map $\varphi$ is holomorphic up to homotopy with respect to the complex structure on $M$ described above, whereas
 the map  $\tilde \varphi$ is holomorphic up to homotopy if one uses the same complex structure $J^{G/T}_{R^+}$ on all connected components of $M$.
\end{remark}

\subsection{Line bundles}

Consider the line bundle $L \to M$ such that for every $[w] \in W_G/W_K$,
\[
L|_{G/T \times \{[w]\}} = G\times_{T}\C_{w^{-1}(\lambda - \rho)}.
\]
\begin{lemma}\label{lem L}
There is a $T$-equivariant isomorphism of line bundles
\[
\varphi^*(\LGc|_{V_c}) \cong L|_{V\times W_G/W_K}.
\]
\end{lemma}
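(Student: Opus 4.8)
The plan is to verify the claimed isomorphism component by component, using the already-established $T$-equivariant diffeomorphism $\varphi$ from Proposition \ref{prop phi} and the explicit formula \eqref{eq def phi}. Since both line bundles and the map $\varphi$ decompose over the connected components indexed by $W_K \times W_G/W_K$, it suffices to check the isomorphism on each piece $w_K U \times \{[w]\}$ separately and then assemble the pieces $T$-equivariantly. First I would restrict attention to the basic building block: the map $\varphi_w\colon U \to U_c^w$, and check that $\varphi_w^*(\LGc|_{U_c^w})$ is $T$-equivariantly isomorphic to $(L^G_{w^{-1}(\lambda-\rho)})|_U$, which is the restriction of $L|_{G/T\times\{[w]\}}$ to $U$.

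The key computation is to trace the $T$-action weights through the construction. The bundle $\LGc = G_c \times_T \C_{\lambda-\rho}$ has fibre $\C_{\lambda-\rho}$ over $eT \in G_c/T$, and more generally the fibre over $wT$ carries the $T$-weight $w^{-1}(\lambda-\rho)$ because $w$ normalises $T$ (this is the standard transformation law for associated bundles under the normaliser). On the source side, $L^G_{w^{-1}(\lambda-\rho)}$ has fibre $\C_{w^{-1}(\lambda-\rho)}$ over $eT \in G/T$. Since $\varphi_w$ sends $eT$ to $eT \in U_c^w$ but the relevant base point in $G_c/T$ is actually $wT$ (recall $\eta_c^w(Y) = \exp(Y)wT$ enters once we compose with multiplication by $w$), the pullback weight over the base point matches $w^{-1}(\lambda-\rho)$ on the nose. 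I would make this precise by writing down the isomorphism on fibres explicitly and then extending it over all of $U$ using the local trivialisations provided by $\eta$, $\psi_w$ and $\eta_c$; because these are all the maps already used to build $\varphi_w$, the extension is canonical.

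To globalise, I would invoke the $T$-equivariance of $w\circ \varphi_w$ (Lemma \ref{lem T equivar}) and of $\varphi$ (the preceding lemma), which guarantees that the fibrewise isomorphisms patch together compatibly across the translates $w_K U$ and intertwine the $T$-actions on both bundles. Concretely, having fixed the isomorphism over $U \times \{[w]\}$, I would transport it to $w_K U \times \{[w]\}$ using the $T$-action together with the left $N_K(T)$-action implicit in \eqref{eq def phi}, and check that the result is independent of the chosen coset representative $w_K$ exactly as in the well-definedness argument for $\varphi$. The main obstacle, I expect, is bookkeeping the twist by $w$: one must be careful that the weight appearing in $L|_{G/T\times\{[w]\}}$ is $w^{-1}(\lambda-\rho)$ and not $w(\lambda-\rho)$ or $\lambda-\rho$, and that this is precisely compensated by the base-point shift from $eT$ to $wT$ induced by the factor $w$ in $w\varphi_w$. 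Once the sign and direction of this twist are pinned down on a single fibre, the rest is a routine $T$-equivariant patching argument.
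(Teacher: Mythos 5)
Your proposal is correct and follows essentially the same route as the paper: the paper's proof also reduces to the single claim $(w\circ\varphi_w)^*(\LGc|_{wU_c^w}) \cong L^G_{w^{-1}(\lambda-\rho)}|_U$ on each component, and its key ingredient (Lemma \ref{lem vb x}) is exactly the ``standard transformation law for associated bundles under the normaliser'' you invoke, namely that pulling back $G_c\times_T\C_{\lambda-\rho}$ along $Y\mapsto \exp(Y)wT$ trivialises it with fibre $\C_{w^{-1}(\lambda-\rho)}$. Your identification of the crucial bookkeeping point --- that the twist to $w^{-1}(\lambda-\rho)$ is produced by the base-point shift from $eT$ to $wT$ coming from the factor $w$ in $w\varphi_w$ --- together with the patching over the translates $w_K U$ via $T$-equivariance, matches the paper's argument step for step.
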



\begin{lemma}\label{lem vb x}
Let $G$ be any Lie group, $H<G$ a compact subgroup, and $\pi\colon H \to \GL(V)$ a representation of $H$ in a finite-dimensional vector space $V$. Let $x \in N_G(H)$. Consider the representation $x^{-1}\cdot\pi\colon H \to \GL(V)$ given by $(x^{-1}\cdot \pi)(h) = \pi(x^{-1}hx)$. Write $V_{\pi}$ and $V_{x^{-1}\cdot \pi}$ for the vector space $V$ on which $H$ acts via $\pi$ and $x^{-1}\cdot \pi$, respectively. Fix an $\Ad(H)$-invariant inner product on $\kg$. Let $\kh^{\perp}$ be the orthogonal complement to $\kh$ in $\kg$. Define the map $\eta_x\colon \kh^{\perp} \to G/H$ by $\eta_x(X) = \exp(X)xH$, for $X \in \kh^{\perp}$. Let $\tilde U \subset \kh^{\perp}$ be an $\Ad(H)$-invariant open subset such that $\eta|_{\tilde U}$ is a diffeomorphism onto its image $U_x$. Then we have an $H$-equivariant isomorphism of vector bundles
\[
\eta_x^*((G\times_H V_{\pi})|_{U_x}) \cong \tilde U \times V_{x^{-1}\cdot \pi}.
\]
\end{lemma}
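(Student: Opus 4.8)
The plan is to write down an explicit trivialisation of the pullback bundle over $\tilde U$ and then check that it is $H$-equivariant for the twisted action on the right-hand side. First I would record that $\eta_x$ is itself $H$-equivariant, where $H$ acts on $\kh^{\perp}$ through $\Ad$ and on $G/H$ by left multiplication. Indeed, for $h \in H$ one has $\exp(\Ad(h)X)x = h\exp(X)h^{-1}x$, and since $x \in N_G(H)$ the element $x^{-1}h^{-1}x$ lies in $H$, so $h^{-1}x = x(x^{-1}h^{-1}x)$ and $\eta_x(\Ad(h)X) = h\exp(X)h^{-1}xH = h\exp(X)xH = h\cdot \eta_x(X)$. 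This justifies viewing $\eta_x^*((G\times_H V_{\pi})|_{U_x})$ as an $H$-equivariant vector bundle over $\tilde U$.

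Next I would use the canonical group representative $\exp(X)x$ of the point $\eta_x(X)$ to trivialise the fibre. Define
\[
\Phi\colon \tilde U \times V \to \eta_x^*\bigl((G\times_H V_{\pi})|_{U_x}\bigr), \qquad \Phi(X,v) = \bigl(X, [\exp(X)x, v]\bigr).
\]
For each fixed $X$ the map $v \mapsto [\exp(X)x, v]$ is a linear isomorphism from $V$ onto the fibre of $G\times_H V_{\pi}$ at $\eta_x(X)$, and $\Phi$ is a smooth bundle map covering the identity on $\tilde U$ with smooth inverse. Hence $\Phi$ is an isomorphism of vector bundles, and it remains only to identify the $H$-action on the source.

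The heart of the argument is the equivariance calculation, and this is where the twist $x^{-1}\cdot \pi$ is forced to appear. The $H$-action on $G\times_H V_{\pi}$ is $h\cdot[g,v] = [hg,v]$, so on the pullback $h\cdot(X,e) = (\Ad(h)X, h\cdot e)$. Writing $k := x^{-1}hx \in H$, so that $h^{-1}x = xk^{-1}$, and using $\exp(\Ad(h)X) = h\exp(X)h^{-1}$ together with the defining relation $[gk^{-1}, w] = [g, \pi(k^{-1})w]$ of the associated bundle, I compute
\[
\bigl[\exp(\Ad(h)X)x, \pi(k)v\bigr] = \bigl[h\exp(X)xk^{-1}, \pi(k)v\bigr] = \bigl[h\exp(X)x, v\bigr].
\]
Since $\pi(k) = \pi(x^{-1}hx) = (x^{-1}\cdot\pi)(h)$, this says exactly that $\Phi(\Ad(h)X, (x^{-1}\cdot\pi)(h)v) = h\cdot\Phi(X,v)$. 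Thus $\Phi$ intertwines the action $h\cdot(X,v) = (\Ad(h)X, (x^{-1}\cdot\pi)(h)v)$ on $\tilde U \times V_{x^{-1}\cdot\pi}$ with the pullback action, giving the asserted $H$-equivariant isomorphism.

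I expect the only delicate point to be the bookkeeping in this last computation: one must commute $h$ past $x$ in the correct direction so that it is $x^{-1}hx$, and not $xhx^{-1}$, that twists the representation, and one must apply the associated-bundle relation on the correct side. Everything else — smoothness of $\Phi$ and its inverse, and fibrewise linearity — is routine and can be stated without detailed verification.
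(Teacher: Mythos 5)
Your proposal is correct and follows essentially the same route as the paper: the paper defines the inverse of your map $\Phi$, namely $(X,[\exp(X)x,v])\mapsto(X,v)$, and verifies equivariance by the same manipulation $[h\exp(X)x,v]=[\exp(\Ad(h)X)x(x^{-1}hx),v]=[\exp(\Ad(h)X)x,(x^{-1}\cdot\pi)(h)v]$. Your preliminary check that $\eta_x$ is $H$-equivariant is a harmless (and reasonable) addition that the paper leaves implicit.
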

\begin{proof}
Note that 
\[
\eta_x^*((G\times_H V_{\pi})|_{U_x}) = \{ (X, [\exp(X)x, v]); X \in \tilde U,  v \in V_{\pi}\}. 
\]
Define the map 
\[
f\colon \eta_x^*((G\times_H V_{\pi})|_{U_x}) \to \tilde U \times V_{x^{-1}\cdot \pi}
\]
by
\[
f(X, [\exp(X)x, v]) = (X,v)
\]
for $X\in \tilde U$ and $v\in V_{\pi}$.
Note that this map is well-defined, and a vector bundle isomorphism.
And  for all $h \in H$,
\[
\begin{split}
h\cdot (X, [\exp(X)x, v]) &= (\Ad(h)X, [h\exp(X)x, v]) \\
&= (\Ad(h)X, [\exp(\Ad(h)X)x(x^{-1}hx), v]) \\
&=  (\Ad(h)X, [\exp(\Ad(h)X)x, (x^{-1}\cdot\pi)(h)v]).
\end{split}
\]
So
\[
f(h\cdot (X, [\exp(X)x, v])) = (\Ad(h)X, (x^{-1}\cdot \pi)(h)v) = h\cdot f(X, [\exp(X)x, v]).
\]
\end{proof}

\begin{proofof}{Lemma \ref{lem L}}
Let $w\in N_{G_c}(T)$. The claim is that we have a $T$-equivariant isomorphism of line bundles
\beq{eq line bdles iso}
(w\circ \varphi_w)^*(G_c\times_T \C_{\lambda - \rho}|_{wU^w_c}) \cong (G\times_T \C_{w^{-1}(\lambda - \rho)}|_U).
\eeq 
By Lemma \ref{lem vb x}, we have isomorphisms of $T$-equivariant line bundles
\beq{eq line bdles}
\begin{split}
\eta^*((G\times_T \C_{w^{-1}(\lambda - \rho}))|_{U}) &\cong \tilde U \times \C_{w^{-1}(\lambda - \rho)};\\
(\eta_c^w)^*((G_c\times_T \C_{\lambda - \rho}))|_{wU_c^w}) &\cong (\eta_c^w)^{-1}(wU_c^w) \times \C_{w^{-1}(\lambda - \rho)}.
\end{split}
\eeq
Now 
\[
(\eta_c^w)^{-1}(wU_c^w) = (\eta_c^w)^{-1}(w\eta_c(\psi_w(\tilde U))) = \Ad(w)\circ \psi_w(\tilde U).
\]
By Lemma \ref{lem Adw psi equivar}, we have a $T$-equivariant, holomorphic diffeomorphism
\[
\Ad(w)\circ \psi_w\colon \tilde U \xrightarrow{\cong} (\eta_c^w)^{-1}(wU_c^w).
\]
So the two right hand sides of \eqref{eq line bdles} are isomorphic as $T$-equivariant line bundles.

We saw in the proof of Lemma \ref{lem T equivar} that
\[
w\circ \varphi_w \circ \eta= \eta_c^w\circ (\Ad(w)\circ \psi_w).
\] 
So the pullback of the left hand side of \eqref{eq line bdles iso} along $\eta$ equals
\[
\begin{split}
(\Ad(w)\circ\psi_w)^* (\eta_c^w)^*((G_c\times_T \C_{\lambda - \rho}))|_{wU_c^w}) &= (\Ad(w)\circ\psi_w)^*\bigl((\eta_c^w)^{-1}(wU_c^w) \times \C_{w^{-1}(\lambda - \rho)}\bigr)\\
&= (\eta_c^w\circ \Ad(w)\circ\psi_w)^{-1}(wU_c^w)\times  \C_{w^{-1}(\lambda - \rho)}.\\
& = (w\circ \eta_c \circ\psi_w)^{-1}(wU_c^w) \times  \C_{w^{-1}(\lambda - \rho)}\\
&= \tilde U \times  \C_{w^{-1}(\lambda - \rho)}.
\end{split}
\]
The latter line bundle is 
the pullback of the right hand side of \eqref{eq line bdles iso} along $\eta$.
%
%
%
%
\end{proofof}


\subsection{Proof of Theorem \ref{thm char id}} \label{sec pf thm}

Since $G_c$ is compact, the claim is that if $\phi$ is the $L$-parameter of $\pi^G_{\lambda}$, and $\phi'$ is the corresponding $L$-parameter for $G_c$,
\beq{eq char id}
(-1)^{\dim(G/K)/2}\sum_{\pi \in \Pi_{\phi}} \Theta_{\pi}|_{T^{\reg}} = \sum_{\pi' \in \Pi_{\phi'}} \Theta_{\pi'}|_{T^{\reg}}.
\eeq

Let $g \in T$ be an element whose powers are dense in $T$.
Let $\bar \partial_L$ be the Dolbeault operator on $M$ coupled to $L$. 
Propositions \ref{prop loc} and \ref{prop phi} and Lemma \ref{lem L} imply that
\begin{multline}\label{eq pf 1}
\tau_g(\ind_{G_c}(\bar \partial_{\LGc} + \bar \partial_{\LGc}^*)) = \tau_g(\ind_G(\bar \partial_{L} + \bar \partial_{L}^*))\\
	= \sum_{[w] \in W_G/W_K} \tau_g(\ind_G(\bar \partial_{L^G_{w^{-1}(\lambda - \rho)}} + \bar \partial_{L^G_{w^{-1}(\lambda - \rho)}}^*)).
\end{multline}

For any regular element $\nu \in i\kt^*$, let $\rho_{\nu}$ be half the sum of the roots in $R$ with positive inner products with $\nu$. Then for any $w \in W_G$, $\rho_{w\lambda} = w\rho$. So the right hand side of \eqref{eq pf 1} equals
\[
 \sum_{[w] \in W_G/W_K} \tau_g(\ind(\bar \partial_{L^G_{w^{-1}\lambda - \rho_{w^{-1}\lambda}}} + \bar \partial_{L^G_{w^{-1}\lambda - \rho_{w^{-1}\lambda}}}^*)).
\]
By Proposition \ref{prop ds index}, this equals
\[
(-1)^{\dim(G/K)/2} \sum_{[w] \in W_G/W_K} \Theta_{\pi^G_{w^{-1}\lambda}}(g).
\]

As noted below Proposition \ref{prop Lpack ds}, we have
\[
\Pi_{\phi} = \{\pi^{G}_{w \lambda}; w \in W_G\}/W_K,
\]
and similarly,
\[
\Pi_{\phi'} = \{\pi^{G_c}_{\lambda}\}.
\]
Note that $\pi^{G_c}_{\lambda}$ is the irreducible representation of $G_c$ with infinitesimal character $\lambda$, so with highest weight $\lambda - \rho$.
The Borel--Weil--Bott theorem therefore implies that  the right hand side of \eqref{eq char id}, evaluated at $g$, equals
\[
\Theta_{\pi^{G_c}_{\lambda}}(g) = \tau_g(\ind_{G_c}(\bar \partial_{\LGc} + \bar \partial_{\LGc}^*)), 
\]
which by the above considerations equals the left hand side of \eqref{eq char id}, evaluated at $g$. We have assumed that the powers of $g$ are dense in $T$, but as such elements are dense in $T^{\reg}$, Theorem \ref{thm char id} follows in the case where $G' = G_c$ since both sides of \eqref{eq char id} are analytic on $T^{\reg}$.
As noted at the start of Section \ref{sec proof}, that case implies the general case, so the theorem has been proved. 

\subsection{Example: $G = \SL(2,\R)$}

Let $G=\SL(2,\R)$, and $G'=G_c = \SU(2)$. Note that $G$ is quasi-split. Take $K=T = \SO(2) \cong \U(1) \hookrightarrow \SU(2)$.   Let $\rho \in i\kt^*$ be the element mapping $X := \begin{bmatrix} {0} & {-1} \\{1} &{0} \end{bmatrix} \in \kt$ to $i$. Let $n \in \N$, and set $\lambda = n\rho$. 
Let $\pi^{G}_{\pm\lambda}$ be the discrete series representation of $\SL(2, \R)$ with Harish-Chandra parameter $\pm\lambda$.
Let $\phi$ be the $L$-parameter of  $\pi^G_{\lambda}$; see Example \ref{ex L param SL2}. Then $\Pi_{\phi}= \{\pi^{G}_{\lambda}, \pi^{G}_{-\lambda}\}$.
 Let $\phi'$ be the corresponding $L$-parameter for $G_c$. Then
$\Pi_{\phi'}= \{\pi^{G_c}_{\lambda}\}$, where $\pi^{G_c}_{\lambda}$ is the irreducible representation of $\SU(2)$ with infinitesimal character $\lambda$, so with highest weight $\lambda - \rho$. 

Fix $g \in T$ with dense powers. Now $G_c/T = S^2$, and $(G_c/T)^g$ consists of the north and south poles. And $G/T$ is the hyperbolic plane $\{x^2+y^2-z^2=n\}$, and $(G/T)^g$ is the single point $eT = (0,0,n)$. We have $W_K = \{e\}$ and $W_G = \Z_2$, where the nontrivial element of $W_G$ is represented by\footnote{Note that as matrices, we have $w = X$. We use different letters because $w$ is viewed as a group element, whereas $X$ is viewed as a Lie algebra element.
} $w:=\begin{bmatrix} {0} & {-1} \\{1} &{0} \end{bmatrix} \in N_{\SU(2)}(\U(1))$. This element maps the north pole of $G_c/T = S^2$ to the south pole. So we see that indeed
\[
(G_c/T)^g = (G/T)^g \cup w (G/T)^g,
\] 
The map $\varphi$ identifying neighbourhoods of these fixed point sets in $G_c/T$ and $G/T \times W_G/W_K$, respectively, is depicted in 
 Figure \ref{fig SL2}.

The Atiyah--Segal--Singer or Atiyah--Bott fixed point formula implies that, if $g = \exp(tX)$,
\beq{eq index SU2}
\ind_{\SU(2)}(\bar \partial_{L^{\SU(2)}_{\lambda - \rho}} + \bar \partial_{L^{\SU(2)}_{\lambda - \rho}}^*) (g) = \frac{e^{\lambda}(g) - e^{-\lambda}(g)}{e^{\rho}(g) - e^{-\rho}(g)} = \frac{\sin(nt)}{\sin(t)}.
\eeq
Theorem 2.1 in \cite{Hochs-Wang} implies that
\[
\tau_g(\ind_{\SL(2,\R)} (\bar \partial_{ L^{\SL(2,\R)}_{\pm(\lambda - \rho)} } 
+ \bar \partial_{L^{\SL(2,\R)}_{\pm(\lambda - \rho)} }^*)) = \frac{e^{\pm\lambda}(g) }{\pm(e^{\rho}(g) - e^{-\rho}(g))} = \pm\frac{e^{\pm int}}{2i\sin(t)}.
\]

Let $\phi$ be the $L$-parameter of $\pi^G_{\lambda}$, and let $\phi'$ be the corresponding $L$-parameter for $\SU(2)$. Then we conclude that
\begin{multline*}
(-1)^{\dim(\SL(2,\R)/\SO(2))/2}\sum_{\pi \in \Pi_{\phi}} \Theta_{\pi}(g) = \\
 \tau_g(\ind_{\SL(2,\R)}(\bar \partial_{L^{\SL(2,\R)}_{\lambda - \rho}} + \bar \partial_{L^{\SL(2,\R)}_{\lambda - \rho}}^*))  
+\tau_g( \ind_{\SL(2,\R)}(\bar \partial_{L^{\SL(2,\R)}_{-(\lambda - \rho)}} + \bar \partial_{L^{\SL(2,\R)}_{-(\lambda - \rho)}}^*)) 
= \\
\ind_{\SU(2)}(\bar \partial_{L^{\SU(2)}_{\lambda - \rho}} + \bar \partial_{L^{\SU(2)}_{\lambda - \rho}}^*) (g)
=\\ (-1)^{\dim(\SU(2)/\SU(2))/2}
 \sum_{\pi' \in \Pi_{\phi'}} \Theta_{\pi'}(g).
\end{multline*}

In this example, we see from the fixed point formulas used, and Weyl's and Harish-Chandra's character formulas, that the respective  indices evaluated at $g$ equal the characters of the corresponding representations. And the character identity follows directly from the explicit expressions for these characters. But note that in the proof of Theorem \ref{thm char id}, these fixed point formulas and character formulas were not used.

\begin{figure}
\begin{centering}
\includegraphics[width=0.95\textwidth]{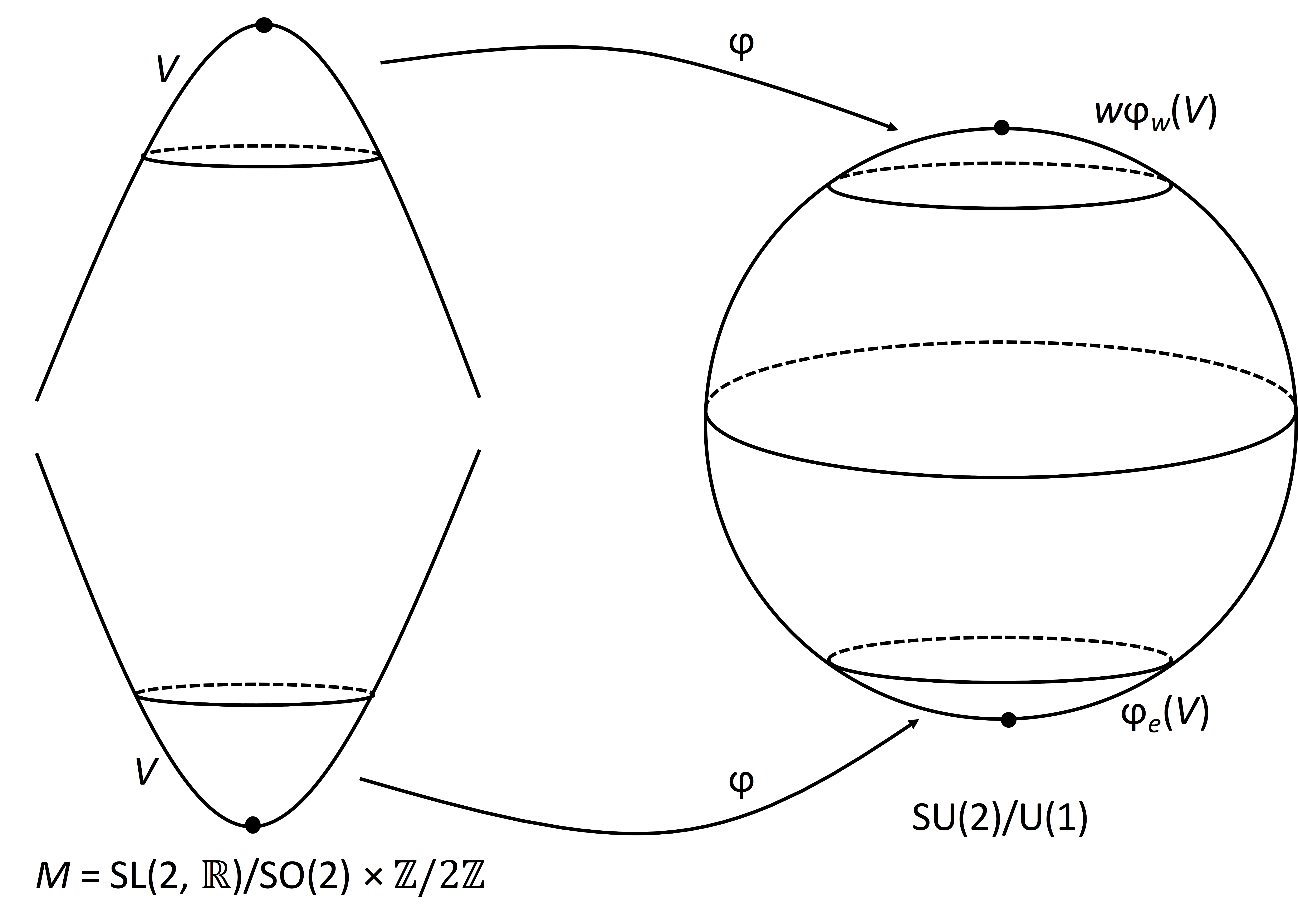}
\caption{The map $\varphi$ for $G = \SL(2,\R)$}
\label{fig SL2}
\end{centering}
\end{figure}

\end{document}